\newtheorem{theorem}{Theorem}[section]
\newtheorem{lemma}{Lemma}[section]
\newtheorem{corollary}{Corollary}[section]
\numberwithin{equation}{section}
\newcommand{\FF}{\mathbb{F}}    \newcommand{\CC}{\mathbb{C}}    
 \def\dim{\mathrm{dim}}  
 \def\ZZ{\mathbb{Z}} 
   \def\Ind{\mathrm{Ind}} \def\GL{\mathrm{GL}}   \def\Res{\mathrm{Res}} \def\dd{\displaystyle}   \def\spanning{\textnormal{-span}}
\def\vphi{\varphi}
\newcommand{\fkn}{\mathfrak{n}}
\newcommand{\Tr}{\mathrm{Tr}}
\newcommand{\cP}{\mathcal{P}}
\newcommand{\cS}{\mathcal{S}}
\newcommand{\One}{{1\hspace{-.14cm} 1}}
\newcommand{\SInd}{\mathrm{SInd}}
\newcommand{\SRes}{\mathrm{SRes}}
\newcommand{\larc}[1]{\hspace{-.4ex}\overset{#1}{\frown}\hspace{-.4ex}}
\newcommand{\cR}{\mathcal{R}}
\renewcommand{\@makefnmark}{\mbox{\textsuperscript{}}}
\def\adots{\mathinner{\mkern2mu\raise0pt\hbox{.}  
\mkern2mu\raise4pt\hbox{.}\mkern1mu
\raise7pt\vbox{\kern7pt\hbox{.}}\mkern1mu}}
\begin{document}

\title{Superinduction for pattern groups}
\author{Eric Marberg\footnote{Stanford University: \textsf{emarberg@stanford.edu}}{ }  and Nathaniel Thiem\footnote{University of Colorado at Boulder: \textsf{thiemn@colorado.edu}}}
\date{}

\maketitle

\abstract{It is well-known that the representation theory of the finite group of unipotent 
upper-triangular matrices $U_n$ over a finite field is a wild problem.  By instead 
considering approximately irreducible representations (supercharacters), one obtains a rich combinatorial 
theory analogous to that of the symmetric group, where we replace partition combinatorics 
with set-partitions.    This paper studies Diaconis--Isaacs' concept of superinduction in pattern groups.   While superinduction shares many desirable properties with usual induction, it no longer takes characters to characters.  We begin by finding sufficient conditions guaranteeing that super-induction is in fact induction.  It turns out for natural embedding of $U_m$ in $U_n$, super-induction is induction.   We conclude with an explicit combinatorial algorithm for computing this induction 
analogous to the Pieri-formulas for the symmetric group.}

\section{Introduction}

Understanding the representation theory of the finite group of upper-triangular matrices $U_n$ is  a well-known wild problem.  Therefore, it came as somewhat of a surprise when C. Andr\'e was able to show that by merely ``clumping" together some of the conjugacy classes and some of the irreducible representations one attains a workable approximation to the representation theory of $U_n$ \cite{An95,An99,An01,An02}. In his Ph.D. thesis \cite{Ya01}, N. Yan showed how the algebraic geometry of the original construction could be replaced by more elementary constructions.   E. Arias-Castro, P. Diaconis, and R. Stanley \cite{ADS04} then demonstrated that this theory can in fact be used to study random walks on $U_n$ using techniques that traditionally required the knowledge of the full character theory \cite{DS93}.  Thus, the approximation is fine enough to be useful, but coarse enough to be computable.  Furthermore, this approximation has a remarkable combinatorial structure analogous to that of the symmetric group, where we replace partitions with set-partitions,
$$\left\{\begin{array}{c} \text{Almost irreducible}\\ \text{representations of $U_n$}\end{array}\right\} \longleftrightarrow \left\{\begin{array}{c} \text{Labeled set partitions}\\ \text{of $\{1,2,\ldots,n\}$}\end{array}\right\}.$$
 One of the main results of this paper is to extend the analogy with the symmetric group by giving a combinatorial Pieri-like formula for set-partitions that corresponds to induction in $U_n$.

In \cite{DI06}, P. Diaconis and M. Isaacs generalized this approximating approach to develop a the concept of a \emph{supercharacter theory} for all finite groups, where irreducible characters are replaced by supercharacters and conjugacy classes are replaced by superclasses.  In particular, their paper generalized Andr\'e's original construction by giving an example of a supercharacter theory for a family of groups called algebra groups.  For this family of groups, they show that supercharacters restrict to $\ZZ_{\geq 0}$-linear combination of supercharacters, tensor products of supercharacters are $\ZZ_{\geq 0}$-linear combinations of supercharacters, and they develop a notion of superinduction that is the adjoint functor to restriction for supercharacters.   Unfortunately, a superinduced supercharacter is not necessarily a $\ZZ_{\geq 0}$-linear combination of supercharacters; in fact, it need not be a character at all.   This paper examines superinduction more closely, giving sufficient conditions for when superinduction is induction (in which case are guaranteed characters).   

Section 2 reviews the notion of a supercharacter theory, and defines the fundamental combinatorial and algebraic objects needed for the main results.   Section 3 proceeds to compare superinduction to induction, where the main results --Theorems \ref{AlgebraGroupSuperinductionIsInduction}, \ref{SemidirectProductSuperinduction}, and \ref{CosetRepresentativesSuperinduction} -- give sufficient conditions for when superinduction is induction.   As a consequence, we conclude that superinduction between $U_m\subseteq U_n$ is induction.  Section 4 decomposes induced supercharacters from $U_m$ to $U_n$ as combinatorial ``products" on set-partitions $\mu$ of the form
$$\Ind_{U_m}^{U_n}(\chi^\mu)=\chi^\mu\ast_1 \chi^{\{m+1\}}\ast_1 \chi^{\{m+2\}}\ast_1\cdots \ast_1 \chi^{\{n\}}.$$  

This paper is a companion paper to \cite{TVe07}, which studies the restriction of supercharacters by analyzing a family of subgroups that interpolate between $U_n$ and $U_{n-1}$. Other work related to supercharacter theory of unipotent groups include C. Andr\'e and A. Neto's  exploration of supercharacter theories for unipotent groups of Lie types $B$, $C$, and $D$ \cite{AN06}, C. Andr\'e and  A. Nicol\'as' analysis of supertheories over other rings \cite{AnNi06}, and an intriguing possible connection between supercharacter theories and  Boyarchenko and Drinfeld's work on $L$-packets \cite{BD06}.

\subsubsection*{Acknowledgements}

We would like to thank Diaconis and Venkateswaran for many enlightening discussions regarding this work, and the first author would like to thank Stanford University for summer funding during parts of this research.

\section{Preliminaries}

This section reviews the concept of a supercharacter theory, defines a family of groups called pattern groups, and establishes the combinatorial notation for labeled set-partitions.

\subsection{Supercharacter theory}
 
Let $G$ be a group.  A \emph{supercharacter theory} for $G$ is a partition $\cS^\vee$ of the elements of $G$ and a set of characters $\cS$, such that
\begin{enumerate}
\item[(a)] $|\cS|=|\cS^\vee|$,
\item[(b)] Each $S\in \cS^\vee$ is a union of conjugacy classes,
\item[(c)] For each irreducible character $\gamma$ of $G$, there exists a unique $\chi\in \cS$ such that
$$\langle \gamma, \chi\rangle>0,$$
where $\langle,\rangle$ is the usual inner-product on class functions,
\item[(d)] Every $\chi\in \cS$ is constant on the elements of $\cS^\vee$.
\end{enumerate}
We call $\cS^\vee$ the set of \emph{superclasses} and $\cS$ the set of \emph{supercharacters}.  Note that every group has two trivial supercharacter theories -- the usual character theory and the supercharacter theory with $\cS^\vee=\{\{1\},G\setminus\{1\}\}$ and $\cS=\{\One,\gamma_G-\One\}$, where $\One$ is the trivial character of $G$ and $\gamma_G$ is the regular character.

There are many ways to construct supercharacter theories, but this paper will study a particular version developed in \cite{DI06} to generalize Andr\'e's original construction to a larger family of groups called algebra groups.

\subsection{Pattern groups}\label{PatternGroups}

While many results can be stated in the generality of algebra groups, many statements become simpler if we restrict our attention to a subfamily called pattern groups.

Let $U_n$ denote the set of $n\times n$ unipotent upper-triangular matrices with entries in the finite field $\FF_q$ of $q$ elements.   Let $\cP$ be a poset on the set $\{1,2,\ldots, n\}$.  The pattern group $U_\cP$ is given by
$$U_\cP=\{u\in U_n\ \mid\ u_{ij}\neq 0\text { implies $i<j$ in $\cP$}\}.$$
Examples of groups in this family include all the unipotent radicals of rational parabolic subgroups of the finite general linear groups $\GL_n(\FF_q)$, of which $U_n$ is the pattern group corresponding to the total order $1<2<3<\cdots<n$.  

An obvious advantage to pattern groups is the associated poset structure, which can be used to describe a variety of group theoretic structures, such as the center, the Frattini subgroups, etc.  We can also describe coset representatives using the poset structure as follows. 

 Let $U_{\cP}\subseteq U_{\cR}$ be pattern groups with corresponding posets $\cP$ and $\cR$.  Let $\cR/\cP$ be the set of relations given by 
 $$i\leq j \text{ in } \cR/\cP,\qquad\text{if $i\leq j$ in $\cR$ and $i\nleq j$ in $\cP$.}$$
 Note that $\cR/\cP$ is not necessarily a poset.   

\begin{lemma} \label{LeftRightCosetRepresentatives}
Let $U_\cP\subseteq U_{\cR}$ be pattern groups.  Then
$$I=\{u\in U_{\cR}\ \mid\  u_{ij}\neq 0 \text{ implies $i<j$ in $\cR/\cP$}\}$$
is a set of left coset representatives for $U_{\cR}/U_{\cP}$ and a set of right coset representatives for $U_{\cP}\backslash U_{\cR}$.
\end{lemma}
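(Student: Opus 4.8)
The plan is to show that the multiplication map $I\times U_\cP\to U_\cR$, $(x,u)\mapsto xu$, is a bijection; by definition this says precisely that $I$ is a set of left coset representatives for $U_\cR/U_\cP$, and the right-coset statement will follow from a mirror-image argument. Note that $I$ is typically \emph{not} a subgroup (the relation $\cR/\cP$ need not be transitive), so one cannot argue purely by index; an explicit normal form is needed.

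First I would pin down the combinatorics of the index sets. Since $U_\cP\subseteq U_\cR$, for each pair $i<j$ in $\cP$ the matrix $1+e_{ij}$ lies in $U_\cP\subseteq U_\cR$, which forces $i<j$ in $\cR$; hence the set $\cP^{<}$ of strict relations of $\cP$ is contained in the set $\cR^{<}$ of strict relations of $\cR$, and the strict relations of $\cR/\cP$ are exactly $\cR^{<}\setminus\cP^{<}$. Counting free entries, $|I|\cdot|U_\cP| = q^{|\cR^{<}|-|\cP^{<}|}\cdot q^{|\cP^{<}|} = q^{|\cR^{<}|} = |U_\cR|$. Consequently, once we know that the map $I\times U_\cP\to U_\cR$ is surjective, a finite-cardinality count forces it to be bijective; so it suffices to prove that every left coset $gU_\cP$ with $g\in U_\cR$ contains an element of $I$.

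For this surjectivity I would run a column reduction on $g\in U_\cR$ using only right multiplications by elements of $U_\cP$. The basic move: given $i<j$ in $\cP$ and a current matrix $h\in U_\cR$, right multiplication by $1-h_{ij}e_{ij}\in U_\cP$ produces $h'$ with $h'_{ab}=h_{ab}$ for $b\neq j$ and $h'_{aj}=h_{aj}-h_{ij}h_{ai}$; in particular $h'_{ij}=0$, and the only entries that change are those of column $j$ lying strictly above row $i$ (because $h_{ai}\neq 0$ forces $a\le i$, with $a=i$ giving the killed entry). Processing the pairs $(i,j)$ with $i<j$ in $\cP$ in order of decreasing $i$ (ties in $j$ broken arbitrarily), each move zeroes its target entry and alters only entries $(a,j)$ with $a<i$, which are processed later or never; hence no previously-zeroed $\cP$-entry is ever disturbed. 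After all moves the resulting matrix $h$ satisfies $h_{ij}=0$ for all $i<j$ in $\cP$, i.e. $h\in I$, and $h=gu$ where $u\in U_\cP$ is the product of the elementary factors used; thus $g=hu^{-1}\in I\cdot U_\cP$, as desired. An entirely analogous row reduction using left multiplications (the move $1-h_{ij}e_{ij}$ on the left kills $h_{ij}$ and alters only row $i$ strictly to the right of column $j$, so one processes the pairs with the column index increasing within each row) shows every right coset $U_\cP g$ meets $I$, and the same cardinality count then gives that $I$ is a set of right coset representatives for $U_\cP\backslash U_\cR$. The one step requiring genuine care — \emph{the main obstacle} — is verifying that the chosen processing order really prevents the elimination steps from re-introducing nonzero values into entries already set to zero; the rest is the cardinality bookkeeping above together with the pigeonhole principle.
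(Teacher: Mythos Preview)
Your proof is correct, and it takes a genuinely different route from the paper's. The paper argues via a \emph{unique ordered factorization}: it fixes a total order on $\{(i,j):1\le i<j\le n\}$ (the one with $(k,l)\le(i,j)$ when $i<k$, or $i=k$ and $j\le l$) and uses that every $u\in U_n$ factors uniquely as $\prod(1+t_{ij}e_{ij})$ in that order; placing all $\cP$-pairs before all $\cR/\cP$-pairs yields $u=(\text{element of }U_\cP)\cdot(\text{element of }I)$, because in that particular order the $\cR/\cP$ factors have no cross terms and their product is literally $1+\sum_{\cR/\cP}t_{ij}e_{ij}\in I$. Uniqueness of the factorization then gives distinctness of coset representatives directly. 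Your argument instead separates existence from uniqueness: column (resp.\ row) reduction in a carefully chosen processing order shows every coset meets $I$, and then the cardinality identity $|I|\cdot|U_\cP|=q^{|\cR^<\setminus\cP^<|}\cdot q^{|\cP^<|}=|U_\cR|$ finishes by pigeonhole. Both proofs hinge on choosing a good order, but for different purposes: the paper orders the \emph{factors} so that the $I$-part has no cross terms, while you order the \emph{elimination steps} so that no killed entry is resurrected. Your version is more self-contained (it does not invoke the unique-factorization fact), while the paper's gives existence and uniqueness in one stroke.
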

\begin{proof}
For $i<j$, let
$$e_{ij}=\text{the $n\times n$ matrix with 1 in position $(i,j)$ and zeroes elsewhere.}$$
Given any total order $\mathcal{T}$ on $\{(i,j)\ \mid\ 1\leq i<j\leq n\}$ and $u\in U_n$, there exist unique coefficients $t_{ij}\in \FF_q$ such that
$$u=\prod_{i<j} (1+t_{ij}e_{ij}),$$
where the product is according to the total order $\mathcal{T}$.  

Fix the total order on $\{(i,j)\ \mid\ 1\leq i<j\leq n\}$ given by
$$(k,l)\leq (i,j), \qquad \text{if $i<k$ OR if $i=k$ and $j\leq l$.}$$
For example, the ordering on $\{(i,j)\ \mid\ 1\leq i<j\leq 4\}$ is
$$(3,4)<(2,4)<(2,3)<(1,4)<(1,3)<(1,2).$$

If $u\in U_\cR$, then
\begin{equation}\label{LeftCosetDecompositionEquation}
u=\prod_{i<j\text{ in $\cP$}} (1+t_{ij} e_{ij}) \prod_{i<j\text{ in $\cR/\cP$}} (1+t_{ij} e_{ij}),
\end{equation}
where $t_{ij}\in \FF_q$ and each product is taken individually according to the fixed total order (That is, we are additionally taking all pairs in $\cR/\cP$ to be greater than those in $\cP$).  Thus,
$$\prod_{i<j\text{ in $\cR/\cP$}} (1+t_{ij} e_{ij})\in L$$
is in the same right coset as $u$.  Furthermore, since (\ref{LeftCosetDecompositionEquation}) is unique, two elements of $I$ cannot be in the same right coset.  The argument for left coset representatives is similar.
\end{proof}

\subsection{Superclasses}

The group $U_\cP$ has a two-sided action on the $\FF_q$-algebra
$$\fkn_\cP=\{u-1\ \mid\ u\in U_\cP\},$$
by both left and right multiplication.  Two elements $u,v\in U_\cP$ are in the same \emph{superclass} if $u-1$ and $v-1$ are in the same two-sided orbit in $\fkn_\cP$.  

\vspace{.25cm}

\noindent\textbf{Superclass row and column reducing.}   Note that since every element of $U_\cP$ can be decomposed as a product of elementary matrices, every element in the orbit containing $v-1\in\fkn_\cP$ can be obtained by applying a sequence of the following row and column operations.
\begin{enumerate}
\item[(a)] A scalar multiple of row $j$ may be added to row $i$ if $j>i$ in $\cP$,
\item[(b)] A scalar multiple of column $k$ may be added to column $l$ if $k<l$ in $\cP$.
\end{enumerate}

\vspace{.25cm}

\noindent\textbf{Example.}  Let
\begin{equation}\label{ExamplePoset}
\cP=\xy<0cm,.8cm>\xymatrix@R=.4cm@C=.25cm{ *={} & *{5} \\
*={} & *{4}\ar @{-} [u] \\ *{1} \ar @{-} [ur] & *{3}  \ar @{-} [u]   \\ & *{2}\ar @{-} [u]   }\endxy\ .
\end{equation}
Then the matrices
$$\left\{\left(\begin{array}{ccccc} 
1 & 0& 0 & 1 & a \\ 
0 & 1 & 0 & 1 & b\\
0 & 0 & 1 & 0 & c\\
0 & 0 & 0 & 1 & 1\\
0 & 0 & 0 & 0 & 1\end{array}\right)\ \bigg|\ a,b,c,d\in \FF_q\right\}\subseteq U_\cP$$
form a superclass of $U_\cP$.

\subsection{Supercharacters}\label{SectionPatternSupercharacters}

The group $U_\cP$ has a two-sided action on the dual space
$$\fkn_\cP^*=\{\lambda:\fkn\rightarrow \FF_q\ \mid\ \lambda\text{ $\FF_q$-linear}\}$$
given by
$$(u\lambda v)(x-1)=\lambda(u^{-1}(x-1)v^{-1}), \qquad\text{where $\lambda\in \fkn_\cP^*$, $u,v,x\in U_\cP$.}$$

Fix a nontrivial group homomorphism $\theta:\FF_q^+\rightarrow \CC^\times$.  The \emph{supercharacter} $\chi^\lambda$ corresponding to $\lambda\in \fkn_\cP^*$ is given by
$$\chi^\lambda(u)=\frac{|U_\cP \lambda|}{|U_\cP \lambda U_\cP|}\sum_{\mu\in U_\cP(-\lambda) U_\cP} \theta\circ \mu(u-1), \qquad\text{for $u\in U_\cP$}.$$
The corresponding modules $V^{-\lambda}$ are given by
$$V^{-\lambda}=\CC\spanning\{v_\mu\ \mid\ \mu\in U_\cP(-\lambda)\},$$
with $U_\cP$ action given by
$$uv_\mu=\theta(\mu(u^{-1}-1))v_{u\mu},\qquad\text{where $u\in U_\cP$, $\mu\in U_\cP(-\lambda)$.}$$

\vspace{.25cm}

\noindent\textbf{Remark.} Given $\lambda\in \fkn_\cP$, there are two natural choices for the supercharacters $\chi^\lambda$ -- whether to sum over $U_\cP \lambda U_\cP$ or $U_\cP (-\lambda) U_\cP$.  We use the convention of \cite{DI06} rather than the one in \cite{DT07}.  With this choice, the character formulas are slightly simpler.

\vspace{.25cm}

\noindent\textbf{Supercharacter row and column reducing.}
We many identify  the functions $\lambda\in \fkn_\cP^*$ with the matrix $\lambda\in \fkn_\cP$ by the convention
$$\lambda_{ij}=\lambda(e_{ij}),\qquad \text{where $e_{ij}\in \fkn_\cP$ has 1 in position $(i,j)$ and zeroes elsewhere.}$$
Then for $u,v\in U_\cP$ and $\lambda\in \fkn_\cP^*$,
$$(u^{-1}\lambda v^{-1})(e_{ij})=\lambda(ue_{ij}v)=\sum_{1\leq k,l\leq n} u_{ki}\lambda_{kl} v_{jl}=\big(\Tr(u) \lambda \Tr(v)\big)_{ij},$$
where $\Tr(u)$ is the transpose of the matrix $u$.  Thus, we can compute the function $(u\lambda v)$ directly by computing the matrix $\Tr(u^{-1})\lambda\Tr(v^{-1})$ and setting all entries to zero that cannot be nonzero in $\fkn_\cP$.  Alternatively, we can apply a sequence of column operations,
\begin{enumerate}
\item[(a)] A scalar multiple of row $i$ may be added to row $j$ if $i<j$ in $\cP$,
\item[(b)] A scalar multiple of column $l$ may be added to column $k$ if $l>k$ in $\cP$,
\end{enumerate}
where we set to zero all nonzero entries that might occur through these operations that are not in allowable in $\fkn_\cP$.

\vspace{.25cm}

\noindent\textbf{Example.} Let $\cP$ be as in (\ref{ExamplePoset}).  Then the set of matrices
$$\left\{\left(\begin{array}{ccccc} 
0 & 0& 0& a & 1 \\ 
0 & 0 & b & a & 1\\
0 & 0 & 0 & ac & c\\
0 & 0 & 0 & 0 & d\\
0 & 0 & 0 & 0 & 0\end{array}\right)\ \bigg|\ a,b,c,d\in \FF_q\right\}\subseteq \fkn_\cP,$$
all give rise to the same supercharacter of $U_\cP$.

\subsection{$\FF_q$-labeled set-partitions}

A fundamental example is the group of unipotent upper-triangular matrices $U_n$, corresponding to the total order $\cP=\{1<2<\cdots<n\}$.  By row and column reducing as in Section \ref{PatternGroups}, we may choose superclass representatives for $U_n$ so that
\begin{equation} \label{UOrbitRepresentatives}
\left\{\begin{array}{c} \text{Superclasses}\\ \text{of $U_n$}\end{array}\right\}\longleftrightarrow 
\left\{u\in U_n\ \big|\ \begin{array}{c} \text{$u-1$ has at most one nonzero}\\ \text{ element in every and column}\end{array}\right\}
\end{equation}
Similarly, character representatives of $U_n$ also have a nice description.
\begin{equation} \label{UcoOrbitRepresentatives}
\left\{\begin{array}{c} \text{Supercharacters}\\ \text{of $U_n$}\end{array}\right\}\longleftrightarrow 
\left\{\lambda\in \fkn_n\ \big|\ \begin{array}{c} \text{$\lambda$ has at most one nonzero}\\ \text{ element in every and column}\end{array}\right\}
\end{equation}

These representatives have a combinatorial description as labeled set partitions.  A \emph{set partition} $\lambda=\{\lambda_1\mid\lambda_2\mid\ldots\mid\lambda_\ell\}$ of $\{1,2,\ldots, n\}$ is a collection of pairwise disjoint, increasing subsequences  such that $\{1,2,\ldots,n\}=\lambda_1\cup \lambda_2\cup \cdots\cup \lambda_\ell$; the subsequences $\lambda_j$ are called the \emph{parts} of the set partition.  We will order the subsequences by their smallest elements.

\vspace{.25cm}

\noindent \textbf{Example.}  The set partitions of $\{1,2,3\}$ are
$$\{1\larc{}2\larc{}3\},\quad \{1\larc{}2\mid 3\},\quad \{1\larc{}3\mid 2\},\quad \{1\mid 2\larc{}3\},\quad \{1\mid 2\mid 3\}.$$

If $q=2$, then 
$$\left\{u\in U_n\ \big|\ \begin{array}{c} \text{$u-1$ has at most one nonzero}\\ \text{ element in every and column}\end{array}\right\} \longleftrightarrow \left\{\begin{array}{c}\text{Set partitions}\\ \text{of $\{1,2,\ldots,n\}$}\end{array}\right\},$$
where given a representative $u$, we construct a set partition by the rule
$$i\larc{} j \qquad \text{if and only if}\qquad u_{ij}=1.$$
For example,
$$u-1=\left(\begin{array}{cccc} 0 & 1 & 0 & 0\\ 0 & 0 & 0 & 1\\ 0 & 0 & 0 & 0\\ 0 & 0 & 0 & 0\end{array}\right)\longleftrightarrow \{1\larc{}2\larc{}4\mid 3\}.$$

Labeled set partitions are the combinatorial generalization that arise when $q>2$.  An $\FF_q$\emph{-labeled set partition} is a set partition $\lambda$ where we label each pair of adjacent elements $i\larc{}j$ with some nonzero element $s\in \FF_q^\times$.

\vspace{.25cm}

\noindent\textbf{Example.}  The $\FF_q$-labeled set partitions of $\{1,2,3\}$ are
$$\{1\larc{s}2\larc{t}3\},\quad \{1\larc{s}2\mid 3\},\quad \{1\larc{s}3\mid 2\},\quad \{1\mid 2\larc{t}3\},\quad \{1\mid 2\mid 3\},\qquad \text{where $s,t\in \FF_q^\times$.}$$

With this notation, for all $q$, 
$$\left\{u\in U_n(\FF_q)\ \big|\ \begin{array}{c} \text{$u-1$ has at most one nonzero}\\ \text{ element in every and column}\end{array}\right\} \longleftrightarrow \left\{\begin{array}{c}\text{$\FF_q$-labeled set partitions}\\ \text{of $\{1,2,\ldots,n\}$}\end{array}\right\}.$$

\vspace{.25cm}

Let
\begin{equation}
\cS_n(q)=\{\text{$\FF_q$-labeled set partitions of $\{1,2,\ldots, n\}$}\}.
\end{equation}

\section{Superinduction}

In \cite{DI06}, Diaconis and Isaacs introduce a notion of superinduction, as a dual functor to restriction.  Specifically, if $H\subseteq G$ are algebra groups and $\chi$ is a superclass function of $H$, then for $g\in G$,
\begin{equation}\label{SuperinductionDefinition}
\SInd_H^G(g)=\frac{1}{|G||H|}\sum_{x,y\in G} \dot\chi(x(g-1)y+1),\quad\text{where}\quad \dot\chi(z)=\left\{\begin{array}{ll} \chi(z), & \text{if $z\in H$,}\\ 0, & \text{if $z\notin H$.}\end{array}\right.
\end{equation}
This formula generalizes induction by averaging over a superclass instead of averaging over a conjugacy class.  

As a functor, superinduction exhibits several desirable qualities, including
\begin{enumerate}
\item[(a)] It is adjoint to restriction on the space of superclass functions, so
$$\langle \SInd_H^G(\gamma),\chi\rangle=\langle \gamma,\Res_H^G(\chi)\rangle,$$
\item[(b)] It takes superclass functions of $H$ to superclass functions of $G$,
\item[(c)] The degree of $\SInd_H^G(\gamma)$ is $\gamma(1)|G|/|H|$.
\end{enumerate}
However, if $\chi$ is a supercharacter of  a subgroup $H$ of a group $G$, then $\SInd_H^G(\chi)$ is not necessarily a character of $G$.

\vspace{.25cm}

\noindent\textbf{Example.}  Let $\FF_q=\FF_2$.  Then by direct computation,
\begin{align*}
\SInd_{U_3\times U_2}^{U_5}\left(\chi^{1|2|3|4|5}\right) &= \chi^{1|2|3|4|5} + \chi^{14|2|3|5} + \chi^{1|24|3|5} + \chi^{1|2|34|5} + \chi^{15|2|3|4} + \chi^{1|25|3|4}+ \chi^{1|2|35|4} \\ & \hspace*{.5cm} + \frac{       1      }{       2      }\chi^{14|25|3} + \frac{       1      }{       2      }\chi^{14|2|35} + \chi^{15|24|3} + \frac{       1      }{       2      }\chi^{1|24|35} + \chi^{15|2|34} + \chi^{1|25|34}.
\end{align*}
The degree $\chi^{1|24|35}(1)=2$.  Since 
$$\chi^{1|24|35}=\frac{1}{c}\sum_{\chi\text{ irreducible}} \chi(1) \chi$$ 
for some $c\in \ZZ_{\geq 1}$ \cite{DI06}, we have that $\chi^{1|24|35}$ must be the sum of two linear characters of $U_5$.  Thus, the superinduced superclass function is not a character.

\vspace{.25cm}

This section explores some of the cases where superinduction turns out to be induction.   The most basic result of this nature is a consequence of the following lemma.

\begin{lemma} \label{TwistedInduction} Let $H\subseteq G$ be algebra groups, and let $\chi$ be a superclass function of $H$.  Then for $g\in G$
$$ \SInd_H^G(\chi)(g) = \frac{1}{|G|} \sum_{x \in G} \Ind_H^G(\chi)(x(g-1)+1).$$
\end{lemma}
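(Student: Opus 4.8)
The plan is to start from the definition of $\SInd_H^G$ in (\ref{SuperinductionDefinition}) and reorganize the double sum over $x,y\in G$ into a single sum over $G$ of an inner average that one recognizes as the standard induced character formula. First I would write
$$\SInd_H^G(\chi)(g)=\frac{1}{|G||H|}\sum_{x,y\in G}\dot\chi\big(x(g-1)y+1\big),$$
and for fixed $x$ make the substitution $y\mapsto x^{-1}y$ (a bijection of $G$ for each fixed $x$), so that the argument becomes $x(g-1)x^{-1}y+1$. Setting $h=x(g-1)x^{-1}+1=x(g-1)x^{-1}+1\in G$ — note this is exactly the conjugate $xgx^{-1}$, since $x(g-1)x^{-1}+1=xgx^{-1}-1+1=xgx^{-1}$ — the argument is $(h-1)y+1$. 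Thus
$$\SInd_H^G(\chi)(g)=\frac{1}{|G|}\sum_{x\in G}\left(\frac{1}{|H|}\sum_{y\in G}\dot\chi\big((xgx^{-1}-1)y+1\big)\right).$$
The key point is then to identify the inner average with $\Ind_H^G(\chi)$ evaluated at $x(g-1)+1$, or rather (after a second reindexing) at the group element appearing in the claim.

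The main step is therefore a ``one-sided'' identity of the form
$$\Ind_H^G(\chi)(w)=\frac{1}{|H|}\sum_{y\in G}\dot\chi\big((w-1)y+1\big)\qquad\text{for all }w\in G,$$
which I would prove separately: expand the right-hand side, substitute $y\mapsto (w-1+1)^{-1}\cdot(\text{something})$? More carefully, since $w\in G$ is invertible, the map $y\mapsto w^{-1}y$ is a bijection of $G$, and $(w-1)y+1$ — hmm, one must be careful because $(w-1)y$ is a product in the algebra, not the group. The cleaner route is: write $(w-1)y+1 = wy - y + 1$; this does not factor as a group product in general. So instead I would go back to the classical formula $\Ind_H^G(\chi)(w)=\frac{1}{|H|}\sum_{\{z\in G:\,z^{-1}wz\in H\}}\chi(z^{-1}wz)$ and compare. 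The identification of $\{y\in G: (w-1)y+1\in H\}$ with a union of cosets, together with the fact that $\chi$ is a superclass function (hence constant on two-sided orbits, in particular on the relevant fibers), is what makes the inner sum collapse to the induced-character value. Once this one-variable lemma is in hand, substituting $w=xgx^{-1}$ gives $\SInd_H^G(\chi)(g)=\frac{1}{|G|}\sum_{x\in G}\Ind_H^G(\chi)(xgx^{-1})$, and since $\Ind_H^G(\chi)$ is a class function this equals $\frac{1}{|G|}\sum_{x\in G}\Ind_H^G(\chi)(g)$ — but that is not the claimed formula; instead I should keep the argument as $x(g-1)+1$ and not conjugate, reordering the original sum as $x$ and $y$ independently rather than pairing them.

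Let me therefore adjust: the cleanest approach is to directly manipulate the double sum by the substitution $x\mapsto x$, $y\mapsto$ (chosen so the algebra expression $x(g-1)y+1$ becomes $(\text{argument})$ with $x(g-1)+1$ isolated). Concretely, I expect the proof to run: fix $x$, and observe that as $y$ ranges over $G$, $x(g-1)y+1 = x(g-1)+1 + x(g-1)(y-1)$ ranges over the set $\{x(g-1)+1 + x(g-1)n : n\in\fkn_G\}$; comparing with the classical induction formula applied to the element $x(g-1)+1\in G$, and using that $\dot\chi$ vanishes off $H$ while $\chi$ is a superclass function on $H$, the inner sum $\frac{1}{|H|}\sum_y \dot\chi(x(g-1)y+1)$ equals $\Ind_H^G(\chi)(x(g-1)+1)$ — this is essentially the special case of the whole identity with $G$ in place of the first ambient group, or can be extracted from \cite{DI06}. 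The hard part will be making this inner identification rigorous: namely showing that $\frac{1}{|H|}\sum_{y\in G}\dot\chi((w-1)y+1)$ is exactly $\Ind_H^G(\chi)(w)$, which requires carefully matching the algebra-orbit bookkeeping (two-sided $H$-orbits in $\fkn$, and how they sit inside $G$) with the group-theoretic coset bookkeeping in Frobenius induction, and invoking that $\chi$ is a superclass function at the critical moment. Everything else — the outer sum over $x$ and closing the display — is formal reindexing.
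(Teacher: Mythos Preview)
Your overall strategy---reorganize the double sum so that the inner sum becomes the Frobenius induction formula---is exactly right, but you substituted in the wrong variable and that sent you down a dead end. After replacing $y\mapsto x^{-1}y$ you are left needing the ``one-sided'' identity
\[
\Ind_H^G(\chi)(w)\;=\;\frac{1}{|H|}\sum_{y\in G}\dot\chi\bigl((w-1)y+1\bigr),
\]
which you correctly flag as the hard part. The trouble is that $(w-1)y+1$ is \emph{not} a conjugate of $w$ in general (as you noticed, $(w-1)y+1=wy-y+1$ does not factor as a group product), so there is no direct route from here to the Frobenius formula. Proving this auxiliary identity is essentially as hard as the lemma itself; you have not reduced the problem.

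The paper's proof avoids this entirely by substituting in the \emph{other} variable: replace $x$ by $y^{-1}x$. This gives
\[
\SInd_H^G(\chi)(g)=\frac{1}{|G||H|}\sum_{x,y\in G}\dot\chi\bigl(y^{-1}x(g-1)y+1\bigr),
\]
and now the key algebraic identity is immediate:
\[
y^{-1}x(g-1)y+1 \;=\; y^{-1}\bigl(x(g-1)+1\bigr)y,
\]
since $y^{-1}\cdot 1\cdot y=1$. The inner sum over $y$ is therefore literally $\frac{1}{|H|}\sum_{y\in G}\dot\chi\bigl(y^{-1}w y\bigr)$ with $w=x(g-1)+1\in G$, which is exactly $\Ind_H^G(\chi)(w)$ by the standard formula. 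No auxiliary lemma, no superclass-function hypothesis, no coset bookkeeping---the whole proof is two lines once you pick the right substitution.
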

\begin{proof}   For $g\in G$, 
$$\SInd_H^G(\chi)(g)= \frac{1}{|H||G|} \sum_{x,y\in G} \dot\chi (x(g-1)y + 1) 
= \frac{1}{|H||G|} \sum_{x,y\in G} \dot\chi(y^{-1}x(g-1)y + 1),$$
where the second equality is a substitution.  Since $y^{-1}y=1$, 
$$\SInd_H^G(\chi)(g)= \frac{1}{|G|} \sum_{x\in G} \frac{1}{|H|} \sum_{y\in G} \dot\chi(y^{-1}(x(g-1)+1)y) 
= \frac{1}{|G|} \sum_{x\in G} \Ind_H^G(\chi)(x(g-1)+1),$$
as desired.
\end{proof}

Since superinduction takes superclass functions to superclass functions, we have the following corollary.

\begin{corollary} \label{SuperinductionIsInduction}
Let $H\subseteq G$ be algebra groups.  Then $\Ind_H^G(\chi)$ is a superclass function of $G$ for every superclass function $\chi$ of $H$ if and only if $\SInd_H^G=\Ind_H^G$. 
\end{corollary}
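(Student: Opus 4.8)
The plan is to prove the two implications separately, using Lemma~\ref{TwistedInduction} as the main tool in both directions.

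For the ``only if'' direction, suppose $\Ind_H^G(\chi)$ is a superclass function of $G$ for every superclass function $\chi$ of $H$. Fix such a $\chi$ and some $g\in G$. Lemma~\ref{TwistedInduction} expresses $\SInd_H^G(\chi)(g)$ as the average of $\Ind_H^G(\chi)(x(g-1)+1)$ over $x\in G$. The key observation is that for each $x\in G$, the element $x(g-1)+1$ lies in the same superclass of $G$ as $g$: indeed $g-1$ and $x(g-1)+1-1 = x(g-1)$ differ by left multiplication by $x\in G$, hence lie in the same two-sided $G$-orbit in $\fkn_G$. Since $\Ind_H^G(\chi)$ is assumed constant on superclasses, every term in the average equals $\Ind_H^G(\chi)(g)$, so the average is just $\Ind_H^G(\chi)(g)$. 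Thus $\SInd_H^G(\chi)(g)=\Ind_H^G(\chi)(g)$ for all $g$, i.e. $\SInd_H^G(\chi)=\Ind_H^G(\chi)$. As $\chi$ was an arbitrary superclass function of $H$, we get $\SInd_H^G=\Ind_H^G$ as operators.

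For the ``if'' direction, suppose $\SInd_H^G=\Ind_H^G$. We already know (property (b) of superinduction, recalled in the excerpt) that $\SInd_H^G$ takes superclass functions of $H$ to superclass functions of $G$. Hence for every superclass function $\chi$ of $H$, $\Ind_H^G(\chi)=\SInd_H^G(\chi)$ is a superclass function of $G$, which is exactly the desired conclusion. This direction is essentially immediate once property (b) is invoked.

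The only real content is the superclass-invariance argument in the ``only if'' direction, and even that is short: the single nontrivial point is recognizing that replacing $g-1$ by $x(g-1)$ stays within the two-sided orbit, so I do not anticipate a genuine obstacle — the proof is a direct application of Lemma~\ref{TwistedInduction} plus the definition of superclass. I should be mildly careful to state things for arbitrary superclass functions (not just supercharacters), and to note that the averaging in Lemma~\ref{TwistedInduction} has exactly $|G|$ terms so the normalization works out, but these are routine bookkeeping rather than substantive steps.
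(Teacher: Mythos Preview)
Your proof is correct and follows exactly the approach the paper intends: the paper states the corollary as an immediate consequence of Lemma~\ref{TwistedInduction} together with the fact that superinduction sends superclass functions to superclass functions, and you have simply spelled out both directions of that implication in detail.
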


\subsection{A left-right symmetry in supercharacter theory}\label{SectionLeftRightSymmetry}

In Section \ref{SectionPatternSupercharacters} we defined supercharacters $\chi^\lambda$ in terms of left modules $V^{-\lambda}$.  However, we obtain the same character by viewing the trace of the right module 
$$M^{-\lambda}=\CC\spanning\{v_\mu\ \mid\ \mu\in (-\lambda)U_\cP\},$$
with action given by 
$$v_\mu u=\theta(\mu(u^{-1}-1)) v_{\mu u}, \qquad\text{where $u\in U_\cP$, $\mu\in (-\lambda)U_\cP$}.$$
Thus, the supercharacter arising from the left module $V^{-\lambda}$ indexed by $-\lambda\in \fkn_\cP^*$ is the same as the supercharacter arising from the right module $M^{-\lambda}$.   In particular, if $U_\cP\subseteq U_\cR$, then for $\lambda\in \fkn_\cR^*$,
$$\Res_{U_\cP}^{U_{\cR}}(V^\lambda)=\bigoplus_{\mu\in \fkn_\cP^*} (V^\mu)^{\oplus m_\mu^\lambda}\qquad\text{if and only if} \qquad
\Res_{U_\cP}^{U_{\cR}}(M^\lambda)=\bigoplus_{\mu\in \fkn_\cP^*} (M^\mu)^{\oplus m_\mu^\lambda},$$
and if $\Ind_{U_\cP}^{U_\cR}$ is a superclass function for either left or right modules, then for $\mu\in \fkn_\cP^*$,
$$\Ind_{U_\cP}^{U_{\cR}}(V^\mu)=\bigoplus_{\lambda\in \fkn_\cR^*} (V^\lambda)^{\oplus m_\mu^\lambda}\qquad\text{if and only if} \qquad
\Res_{U_\cP}^{U_{\cR}}(M^\mu)=\bigoplus_{\lambda\in \fkn_\cR^*} (M^\lambda)^{\oplus m_\mu^\lambda}.$$
These observations will be used below to translate sufficiency conditions that are obvious for either right or left modules to the other side.

\subsection{Superinduction and induction}

Since superinduction sometimes takes characters to noncharacters, it is useful to determine when superinduction is the same as induction.   This section examines some of the cases where this occurs.

\begin{theorem}\label{AlgebraGroupSuperinductionIsInduction} Let $H$ be a subalgebra group of an algebra group $G$, and suppose
\begin{enumerate}
\item[(1)] No two superclasses of $H$ are in the same superclass of $G$,
\item[(2)] $x(h-1)+1 \in H$ for all $x\in G$, $h\in H$.
\end{enumerate}
Then for any superclass function $\chi$ of $H$, 
$$\SInd_H^G(\chi) = \Ind_H^G(\chi).$$
\end{theorem}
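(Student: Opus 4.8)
The plan is to reduce everything to Lemma~\ref{TwistedInduction} and then use the two hypotheses to show that the averaging over $x\in G$ in that lemma collapses to the ordinary induced-character formula. By Lemma~\ref{TwistedInduction}, for $g\in G$ we have
$$\SInd_H^G(\chi)(g)=\frac{1}{|G|}\sum_{x\in G}\Ind_H^G(\chi)(x(g-1)+1).$$
First I would rewrite the right-hand side by expanding $\Ind_H^G(\chi)$ with its usual formula $\Ind_H^G(\chi)(a)=\frac{1}{|H|}\sum_{y\in G}\dot\chi(y^{-1}ay)$, giving a double sum over $x,y\in G$ of $\dot\chi\big(y^{-1}(x(g-1)+1)y\big)$. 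The key point is to understand for which $x$ the element $x(g-1)+1$ is $G$-conjugate into $H$, and with what multiplicity $\dot\chi$ takes a prescribed value there.

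Next, the main structural step: I claim that hypothesis (2), together with the fact that $\chi$ is a superclass function of $H$ and hypothesis (1), forces $\Ind_H^G(\chi)(x(g-1)+1)$ to be independent of $x$, and equal to $\Ind_H^G(\chi)(g)$ when $g-1\in\fn_H$ (i.e.\ $g\in H$), while both sides vanish when $g\notin H$ in the appropriate sense. More precisely: since $\SInd_H^G(\chi)$ is a superclass function of $G$ (property (b) of superinduction) and $\Ind_H^G(\chi)$ is too under our hypotheses, it suffices to check the identity on a set of superclass representatives $g$ of $G$. For such $g$, hypothesis (1) says the superclass of $g$ in $G$ meets at most one superclass of $H$; combined with hypothesis (2) one shows that $x(g-1)+1$, as $x$ ranges over $G$, stays inside $H$ precisely when $g-1$ lies in $\fn_H$, and in that case all the elements $x(g-1)+1$ lie in a single superclass of $H$ (because the left $G$-orbit of $g-1$ is contained in the two-sided $H$-orbit, using (1)). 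Since $\chi$ is constant on superclasses of $H$, every summand equals $\chi$ evaluated at that superclass, and the $\frac{1}{|G|}\sum_{x\in G}$ averages a constant; matching the normalization of $\Ind$ (via the degree formula, property (c), or by a direct orbit-counting argument) yields $\SInd_H^G(\chi)(g)=\Ind_H^G(\chi)(g)$.

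The step I expect to be the genuine obstacle is pinning down the orbit/multiplicity bookkeeping: showing carefully that under (1) and (2) the map $x\mapsto x(g-1)+1$ sends $G$ into a single superclass of $H$ (for $g$ in a superclass meeting $H$) and that the fibers have the uniform size needed to make the normalization come out exactly right. Concretely, I would argue that $G(g-1)+1\subseteq H$ by (2), that $G(g-1)G \supseteq G(g-1)+1$ and the two-sided $H$-orbit of $g-1$ is cut out of the two-sided $G$-orbit by (1), hence $G(g-1)+1$ lies in one $H$-superclass; then a counting of $|G(g-1)|$ versus $|H(g-1)|$ (or an appeal to Corollary~\ref{SuperinductionIsInduction}, reducing the theorem to checking that $\Ind_H^G(\chi)$ is a superclass function of $G$) closes the gap. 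Invoking Corollary~\ref{SuperinductionIsInduction} is in fact the cleanest route: it suffices to prove that $\Ind_H^G(\chi)$ is constant on superclasses of $G$, and for that one only needs that conjugation by $G$ together with the operations $z\mapsto x(z-1)+1$ (allowed by (2)) do not change the value of $\Ind_H^G(\chi)$, which follows from (1) since these operations move $z\in H$ within a single $H$-superclass.
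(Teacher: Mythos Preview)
Your approach is correct and is essentially the paper's own argument; the route through Lemma~\ref{TwistedInduction} or Corollary~\ref{SuperinductionIsInduction} lands in the same place. The only real difference is that you anticipate an ``orbit/multiplicity bookkeeping'' obstacle that does not exist: the proof is purely pointwise, with no fiber counting, no comparison of $|G(g-1)|$ against $|H(g-1)|$, and no degree matching.

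Concretely, the paper shows that for every $g,x,y\in G$, writing $h=(g-1)y+1$, one has
\[
\dot\chi\big(x(g-1)y+1\big)=\dot\chi\big(x(h-1)+1\big)=\dot\chi(h)=\dot\chi\big((g-1)y+1\big).
\]
If $h\in H$, then $x(h-1)+1\in H$ by (2), and the two elements lie in the same $G$-superclass, hence by (1) in the same $H$-superclass, so $\chi$ agrees. If $h\notin H$, then $x(h-1)+1\notin H$ as well (otherwise apply (2) with $x^{-1}$), so both sides are zero. This single observation makes the $x$-sum in the definition of $\SInd$ collapse trivially, and a second application (now inserting $y^{-1}$ on the left) turns the remaining $y$-sum into the usual formula for $\Ind_H^G(\chi)(g)$.

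One small imprecision in your final paragraph: you say the operations $z\mapsto x(z-1)+1$ ``move $z\in H$ within a single $H$-superclass,'' but to prove $\Ind_H^G(\chi)$ is a $G$-superclass function you need invariance for all $z\in G$, not just $z\in H$. This is handled exactly as above: expand $\Ind_H^G(\chi)(z)$ as a sum over conjugates $y^{-1}zy$, and for each term apply (1) and (2) to the element $h=y^{-1}zy$ when it lies in $H$.
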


\begin{proof}  Let $\chi$ be a superclass function of $H$, and let
$S_1$, $S_2$,\ldots, $S_r$ be the superclasses of $H$.  By (1), there exist distinct superclasses $T_1, T_2,\ldots, T_r$ of $G$ such that $S_j\subseteq T_j$.  Note that by (\ref{SuperinductionDefinition}), 
$$\SInd_H^G(\chi)(g)=0=\Ind_H^G(\chi)(g), \qquad \text{for $g\notin T_1\cup T_2\cup\cdots \cup T_r$.}$$

WLOG suppose $g\in T_1$.  Since $\SInd_H^G(\chi)$ is constant on superclasses, we may assume $g\in S_1\subseteq G$.   If $(g-1)y+1\in H$ for $y\in G$, then by (2), $x(g-1)y+1\in H$ for all $x\in G$.  By (1), this implies $(g-1)y+1$ and $x(g-1)y+1$ are in the same superclass of $H$ and 
$$\chi(x(g-1)y+1)=  \chi((g-1)y+1).$$
If $(g-1)y+1\notin H$, then $x(g-1)y+1\notin H$.  Else, $x(g-1)y+1=h\in H$ implies
$$x^{-1}(h-1)+1=(g-1)y+1\notin H,\qquad\text{contradicting (1)}.$$
Thus,
$$\dot\chi(x(g-1)y+1)=0=\dot\chi((g-1)y+1).$$
By definition,
\begin{align*}
\SInd_H^G(\chi)(g)&=\frac{1}{|G||H|} \sum_{x,y\in G} \dot\chi(x(g-1)y+1)=\frac{1}{|G||H|} \sum_{x,y\in G} \dot\chi((g-1)y+1)\\
&=\frac{1}{|H|} \sum_{y\in G} \dot\chi(y^{-1}(g-1)y+1)=\Ind_H^G(\chi)(g),
\end{align*}
as desired.
\end{proof}

\noindent\textbf{Remarks.}  
\begin{enumerate}
\item[(a)]  The theorem still holds if we replace condition (2) with
\begin{enumerate}
\item[(2$'$)]  $(h-1)y+1 \in H$ for all $y\in G$, $h\in H$.
\end{enumerate}
\item[(b)]  For two pattern groups $U_\cP\subseteq U_\cR$, we may translate condition (2) into the condition that if $i<j$ in $\cR$ and $j<k$ in $\cP$, then $i<k$ in $\cP$.  Condition (1) is more complicated, but as we will see in Theorem \ref{SemidirectProductSuperinduction} below, $U_m\subseteq U_n$ satisfy these conditions.
\end{enumerate}

\vspace{.25cm}

Theorem \ref{SemidirectProductSuperinduction}, below, is a variant of Theorem \ref{AlgebraGroupSuperinductionIsInduction} specific to semi-direct products of pattern groups.

\begin{theorem} \label{SemidirectProductSuperinduction}
 Suppose $G = H \ltimes K$ where $G$, $H$, and $K$ are pattern groups.   If $(k-1)(h-1) = 0$ for all $h \in H$ and $k \in K$, then
$$\SInd_H^G(\chi) = \Ind_H^G(\chi) \qquad\text{for all superclass functions $\chi$ of $H$.}$$
\end{theorem}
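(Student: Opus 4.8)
The plan is to verify the two hypotheses of Theorem \ref{AlgebraGroupSuperinductionIsInduction} (or rather, its symmetric variant using Remark (a)), since the conclusion is exactly what we want. So I need to check: (1) no two superclasses of $H$ lie in a common superclass of $G$; and (2$'$) $(h-1)y+1 \in H$ for all $h \in H$, $y \in G$ — or the equivalent condition (2), whichever is cleaner given the hypothesis $(k-1)(h-1)=0$.

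First I would dispatch condition (2) (or (2$'$)). Writing a general element of $G = H \ltimes K$ as $y = h'k'$ with $h' \in H$, $k' \in K$, and expanding $x(h-1)+1$ for $x \in G$, the key observation is that $\fkn_H$ and $\fkn_K$ interact in a controlled way: the hypothesis $(k-1)(h-1)=0$ says $\fkn_K \fkn_H = 0$ inside $\fkn_G$. Combined with the fact that $H$ normalizes $K$ (so $\fkn_H \fkn_K \subseteq \fkn_K$ in an appropriate sense) and that $\fkn_G = \fkn_H \oplus \fkn_K$ as vector spaces, a short computation should show that $x(h-1)$ lands in $\fkn_H$ modulo $\fkn_K$ in a way that forces $x(h-1)+1 \in H$; the point is that left-multiplying $h-1 \in \fkn_H$ by anything in $\fkn_K$ kills it, and left-multiplying by $\fkn_H$ keeps it in $\fkn_H$. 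I expect this to be a clean few lines using that the poset for $G$ decomposes compatibly with those for $H$ and $K$.

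The main obstacle will be condition (1): showing distinct superclasses of $H$ stay distinct in $G$. Here I would argue that if $u - 1, v - 1 \in \fkn_H$ are in the same $U_G$-two-sided orbit, say $u - 1 = x(v-1)y$ for $x, y \in G$, then writing $x = h_1 k_1$ and $y = h_2 k_2$ and using $(k-1)(h-1)=0$ together with its transpose-type consequence (the hypothesis should also give us control of $(h-1)(k-1)$ via the semidirect product structure, or we invoke the left-right symmetry of Section \ref{SectionLeftRightSymmetry} to handle the right side), the $K$-parts act trivially on $\fkn_H$ from the relevant side, so actually $u-1$ and $v-1$ lie in the same $U_H$-two-sided orbit. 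Concretely: $k_1(v-1) = (1 + (k_1 - 1))(v-1) = (v-1) + (k_1-1)(v-1) = v - 1$ since $v - 1 \in \fkn_H$ and $(k_1-1)(v-1) = 0$; similarly on the right if the symmetric vanishing holds, and if only one side vanishes we combine with Corollary \ref{SuperinductionIsInduction} and the left-right symmetry to conclude. Thus $u - 1 = h_1(v-1)h_2 + (\text{terms involving } K)$, and one checks the $K$-terms must vanish because $u - 1 \in \fkn_H$ and $\fkn_G = \fkn_H \oplus \fkn_K$, leaving $u-1 = h_1(v-1)h_2$, i.e.\ $u$ and $v$ are $H$-superclass-equivalent. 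This is the step that requires the most care in bookkeeping the semidirect-product decomposition, but no deep idea beyond it.

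Once (1) and (2$'$) are established, Theorem \ref{AlgebraGroupSuperinductionIsInduction} (with Remark (a)) applies verbatim to give $\SInd_H^G(\chi) = \Ind_H^G(\chi)$ for every superclass function $\chi$ of $H$, completing the proof.
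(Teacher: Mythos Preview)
Your overall strategy matches the paper's exactly: verify hypotheses (1) and (2) of Theorem~\ref{AlgebraGroupSuperinductionIsInduction}. Condition (2) goes through as you indicate: writing $x=h'k$ one gets $x(h-1)=h'k(h-1)=h'(h-1)\in\fkn_H$ since $(k-1)(h-1)=0$, which is precisely the paper's argument.

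For condition (1), however, your handling of the right-hand side has a genuine gap. After using $(k_1-1)(v-1)=0$ to obtain $x(v-1)y=h_1(v-1)h_2k_2$, you need to show that the ``terms involving $K$'' actually lie in $\fkn_K$ so that the direct-sum decomposition $\fkn_G=\fkn_H\oplus\fkn_K$ forces them to vanish. The two justifications you float are both wrong: the symmetric vanishing $(h-1)(k-1)=0$ is \emph{not} assumed and fails in the main example $U_{n-1}\subseteq U_n$; and the left-right symmetry of Section~\ref{SectionLeftRightSymmetry} lets one replace the hypothesis $(k-1)(h-1)=0$ by $(h-1)(k-1)=0$ in the \emph{statement} of the theorem, but does not supply both vanishings inside a single proof.

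The correct tool is the one you allude to earlier but never deploy here: since $K\triangleleft G$, the algebra $\fkn_K$ is a two-sided ideal of $\fkn_G$, so $\fkn_H\fkn_K\subseteq\fkn_K$. This is a nontrivial fact for pattern groups and is exactly the paper's Lemma~\ref{NormalSubgroupInvariance}. With it, $h_1(v-1)h_2(k_2-1)\in\fkn_K$, hence $x(v-1)y\equiv h_1(v-1)h_2\pmod{\fkn_K}$, and the projection argument goes through. The paper arranges the computation slightly differently (writing $x=kh$, $y=h'k'$ so that $h(z-1)h'$ sits in the middle and all cross-terms visibly involve a factor from $\fkn_K$), but the substance is the same once Lemma~\ref{NormalSubgroupInvariance} is in hand.
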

\noindent\textbf{Remarks.} 
\begin{enumerate}
\item[(a)] The condition in Theorem \ref{SemidirectProductSuperinduction} is equivalent to the condition $i<j$ in $\cP_K$ implies $j \not< k$ in $\cP_H$ for all $k>j>i$ in $\cP_G$. 
\item[(b)] This condition may be replaced by, ``If $(h-1)(k-1)= 0$ for all $h \in H$ and $k \in K$."
\end{enumerate}

\vspace{.5cm}

We first prove a useful lemma stating that normal pattern subgroups are ``super"-normal.

\begin{lemma}\label{NormalSubgroupInvariance}
  Suppose $U_{\cP}\subseteq U_{\cR}$ are pattern groups with $U_{\cP}\triangleleft U_{\cR}$.  Then
$$x(h-1)y+1\in U_{\cP}, \qquad\text{for all $x,y\in U_{\cR}$, $h\in U_{\cP}$.}$$
\end{lemma}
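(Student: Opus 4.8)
The plan is to reduce the statement to a manipulation in the algebra $\fkn_\cR$, using only that $U_\cP\triangleleft U_\cR$ translates into a combinatorial closure condition on the posets. First I would record that for pattern groups $U_\cP\subseteq U_\cR$, normality $U_\cP\triangleleft U_\cR$ is equivalent to the statement that $\fkn_\cP$ is a two-sided ideal of $\fkn_\cR$ under matrix multiplication; indeed $u h u^{-1}\in U_\cP$ for all $u\in U_\cR$, $h\in U_\cP$ is equivalent (writing $h=1+m$, $m\in\fkn_\cP$) to $u m u^{-1}\in\fkn_\cP$, and conjugating by the elementary matrices $1\pm t e_{ij}$ that generate $U_\cR$ shows this is the same as $e_{ij} m, m e_{ij}\in\fkn_\cP$ whenever the relevant pair is a relation of $\cR$. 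So normality gives us exactly $\fkn_\cR\cdot\fkn_\cP\subseteq\fkn_\cP$ and $\fkn_\cP\cdot\fkn_\cR\subseteq\fkn_\cP$. (Combinatorially: if $i<j$ in $\cR$ and $j<k$ in $\cP$ then $i<k$ in $\cP$, and symmetrically, though I would phrase the argument algebraically to avoid poset bookkeeping.)

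Next, for $x,y\in U_\cR$ and $h\in U_\cP$, write $x=1+a$, $y=1+b$ with $a,b\in\fkn_\cR$, and $h=1+m$ with $m\in\fkn_\cP$. Then
$$x(h-1)y+1 = (1+a)\,m\,(1+b)+1 = 1 + \big(m + am + mb + amb\big).$$
Each of the three terms $am$, $mb$, $amb$ lies in $\fkn_\cP$: $am\in\fkn_\cR\fkn_\cP\subseteq\fkn_\cP$, $mb\in\fkn_\cP\fkn_\cR\subseteq\fkn_\cP$, and $amb=(am)b\in\fkn_\cP\fkn_\cR\subseteq\fkn_\cP$ (or $a(mb)\in\fkn_\cR\fkn_\cP\subseteq\fkn_\cP$). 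Hence $m+am+mb+amb\in\fkn_\cP$, so the whole expression lies in $1+\fkn_\cP=U_\cP$, which is the claim.

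The only genuine obstacle is the first step — pinning down that $U_\cP\triangleleft U_\cR$ really does give a two-sided ideal, i.e.\ that it suffices to test the normality condition on the elementary generators of $U_\cR$ and that the resulting condition is exactly $\fkn_\cR\fkn_\cP+\fkn_\cP\fkn_\cR\subseteq\fkn_\cP$. This is routine: every $u\in U_\cR$ is a product of matrices $1+te_{ij}$ with $i<j$ in $\cR$, conjugation is multiplicative, and $(1+te_{ij})m(1-te_{ij}) = m + t(e_{ij}m - m e_{ij}) - t^2 e_{ij}m e_{ij}$, so closure of $\fkn_\cP$ under conjugation by all such generators forces $e_{ij}\fkn_\cP, \fkn_\cP e_{ij}\subseteq\fkn_\cP$ for every relation $i<j$ of $\cR$, which (since the $e_{ij}$ with $i<j$ in $\cR$ span $\fkn_\cR$) is precisely the two-sided ideal condition; conversely the ideal condition obviously gives normality. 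Once this is in hand, the rest is the one-line expansion above. I would present the argument in this order: (1) normality $\Leftrightarrow$ two-sided ideal, (2) expand $x(h-1)y+1$, (3) conclude.
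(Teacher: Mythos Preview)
Your proof is correct and takes a cleaner, more conceptual route than the paper's. The paper argues directly: it first reduces to the one-sided statement $1+x(h-1)\in U_\cP$ via the identity $1+x(h-1)y = 1+xy(y^{-1}hy-1)$ and normality, then factors $x$ into elementary matrices $1+re_{ij}$ and checks by hand that $1+(1+re_{ij})te_{kl}\in U_\cP$, the nontrivial case $j=k$ being handled by rewriting $1+te_{jl}+rte_{il}$ as the conjugate $(1+re_{ij})(1+te_{jl})(1-re_{ij})$. You instead isolate the structural fact that normality of $U_\cP$ in $U_\cR$ is equivalent to $\fkn_\cP$ being a two-sided ideal of $\fkn_\cR$, after which the lemma is the single-line expansion you give. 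This buys you a reusable statement and makes the lemma essentially a triviality; the paper's argument is more hands-on but hides the ideal property inside its case analysis.

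One small point to tighten in step~(1): your conjugation formula $(1+te_{ij})m(1-te_{ij})=m+t(e_{ij}m-me_{ij})-t^2e_{ij}me_{ij}$ (and in fact $e_{ij}me_{ij}=0$ here, since $m$ is strictly upper triangular and $i<j$) only directly yields the \emph{commutator} $e_{ij}m-me_{ij}\in\fkn_\cP$, not the two pieces separately. To split them, observe that $e_{ij}m$ is supported in row $i$ (columns $>j$) while $me_{ij}$ is supported in column $j$ (rows $<i$); these supports are disjoint, and since membership in $\fkn_\cP$ is a coordinatewise condition, each piece lies in $\fkn_\cP$ individually. Alternatively, the combinatorial reformulation you mention in parentheses follows immediately by conjugating $1+e_{jk}$ by $1+e_{ij}$ and reading off the $(i,k)$ entry, and that is probably the cleanest way to complete step~(1).
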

\begin{proof}
Note that for $x,y\in U_\cR$ and $h\in U_\cP$,
$$1+x(h-1)y=1+xy(y^{-1}hy-1).$$
Since $U_\cP$ is normal in $U_\cR$, it suffices to show that  $1+x(h-1)\in U_\cP$ for all $x\in U_\cR$, $h\in U_\cP$.

For $x\in U_\cR$ and $h\in U_\cP$, 
\begin{align*}
x&=\prod_{i<j\text{ in $\cR$}} (1+r_{ij} e_{ij}), \qquad \text{for some $r_{ij}\in \FF_q$,}\\
h&=1+\sum_{k<l\text{ in $\cP$}} t_{kl}e_{kl}, \qquad \text{for some $t_{kl}\in \FF_q$,}
\end{align*}
so it suffices to show that $1+ (1+re_{ij})te_{kl}\in U_\cP$ for $r,t\in \FF_q$, $i<j$ in $\cR$, and $k<l$ in $\cP$.

Suppose $i<j$ in $\cR$ and $k<l$ in $\cP$.  If $j\neq k$, then for $r,t\in \FF_q$,
$$1+(1+re_{ij})te_{kl}=1+te_{kl}+re_{ij}e_{kl}=1+te_{kl}\in U_\cP.$$
If $j=k$, then
\begin{align*}
1+(1+re_{ij})te_{jl}&=1+te_{jl}+rte_{il}\\
&=1+te_{jl}+rte_{il}-rte_{jl}e_{ij}-r^2te_{il}e_{ij}\\
&=1+(1+re_{ij})te_{jl}(1-re_{ij}).
\end{align*}
Since $U_\cP$ is normal in $U_\cR$, we have $1+(1+re_{ij})te_{jl}(1-re_{ij})\in U_\cP$.
\end{proof}

\begin{proof}[Proof of Theorem \ref{SemidirectProductSuperinduction}]  We will show that $G$ and $H$ satisfy (1) and (2) of Theorem \ref{AlgebraGroupSuperinductionIsInduction}. 

(1)  Suppose $x,y\in G$ and $z\in H$ such that $1+x(z-1)y\in H$.  It suffices to show that there exist $h,h'\in H$ such that $1+x(z-1)y=1+h(z-1)h'$.  Since $G=HK$, we may write $x=kh$ and $y=h'k'$, where $h,h'\in H$, and $k,k'\in K$, so 
\begin{align*}
1+&x(z-1)y =1+kh(z-1)h'k'\\
&=1+(k-1)h(z-1)h'(k'-1)+h(z-1)h'(k'-1)+(k-1)h(z-1)h'+h(z-1)h'.
\end{align*}
Note that by Lemma \ref{NormalSubgroupInvariance}, 
$$(k-1)h(z-1)h'(k'-1),h(z-1)h'(k'-1),(k-1)h(z-1)h'\in K-1.$$ 
Since $1+x(z-1)y\in H$, we have
 $$kh(z-1)h'k'=(k-1)h(z-1)h'(k'-1)+h(z-1)h'(k'-1)+(k-1)h(z-1)h'=0$$
 and $1+x(z-1)y=1+h(z-1)h'.$

(2)  Suppose $k\in K$ and $h\in H$.  It suffices to show $1+k(h-1)\in H$ (since $h'(h-1)\in H-1$ for all $h'\in H$).  By our assumption,
$$1+k(h-1)=1+(k-1)(h-1)+(h-1)=1+(h-1)\in H,$$
as desired.
\end{proof}

\noindent\textbf{Remark.} Using the arguments in Section \ref{SectionLeftRightSymmetry}, we can replace the condition in Theorem \ref{CosetRepresentativesSuperinduction} by, ``If $(h-1)(k-1)=0$ for all $h\in H$, $k\in K$ if we switch from left modules to right modules.

\vspace{.25cm} 

\noindent\textbf{Examples.} Note that
$$U_n=U_m \ltimes U_\cP,\qquad \text{where}\qquad \cP=\xy<0cm,1.5cm> 
\xymatrix@R=.4cm@C=.3cm{ 
& & *{n} \ar @{-} [d] \\ 
& & *{n-1} \ar @{-} [d]\\
& & *{\vdots}\ar @{-} [d]\\
& & *{m+1}\\
*{1} \ar @{-} [urr] & *{2} \ar @{-} [ur] & *{\cdots} & *{m}\ar @{-} [ul]}\endxy$$
Furthermore, the assumption of Theorem \ref{SemidirectProductSuperinduction} is easily satisfied for this semi-direct product, so $\SInd_{U_m}^{U_n}=\Ind_{U_m}^{U_n}$.   

Another variant is  the semi-direct product $U_{m+n}=(U_m\times U_n) \triangleleft U_{\cP'}$, where 
$$\cP'=\xy<0cm,.5cm> 
\xymatrix@R=.4cm@C=.3cm{ 
*+{m+1} & *+{m+2} & *{\cdots} & *+{m+n}\\
*+{1} \ar @{-} [u] \ar @{-} [ur] \ar @{-} [urrr] & *+{2} \ar @{-} [ul] \ar @{-} [u] \ar @{-} [urr] & *{\cdots} & *+{m}\ar @{-} [ulll]\ar @{-} [ull] \ar @{-} [u]}\endxy$$
is the poset given by $i<j$ if $1\leq i\leq m<j\leq m+n$.   However, for $m$ or $n$ greater than 1, this semidirect product does not satisfy the hypothesis of Theorem \ref{SemidirectProductSuperinduction}, and superinduction does not, in general, give characters for these cases (see the example following Lemma \ref{TwistedInduction}).

\vspace{.25cm}

The following Theorem uses Lemma \ref{LeftRightCosetRepresentatives} to obtain a different set of groups for which superinduction is induction.

\begin{theorem} \label{CosetRepresentativesSuperinduction}
 Let $U_\cP\subseteq U_{\cR}$ be pattern groups, and let
$$I=\{u\in U_{\cR}\ \mid\  u_{ij}\neq 0 \text{ implies $i<j$ in $\cR/\cP$}\}.$$
If $(l-1)(u-1)=0$ for all $l\in I$, $u\in U_\cR$, then
$$\SInd_{U_\cP}^{U_\cR}(\chi) = \Ind_{U_\cP}^{U_\cR}(\chi) \qquad\text{for all superclass functions $\chi$ of $U_\cP$.}$$
\end{theorem}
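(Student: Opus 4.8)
The plan is to reduce Theorem~\ref{CosetRepresentativesSuperinduction} to Lemma~\ref{TwistedInduction} by showing that, under the hypothesis $(l-1)(u-1)=0$ for all $l\in I$ and $u\in U_\cR$, the formula in that lemma collapses to ordinary induction. Concretely, Lemma~\ref{TwistedInduction} gives
$$\SInd_{U_\cP}^{U_\cR}(\chi)(g)=\frac{1}{|U_\cR|}\sum_{x\in U_\cR}\Ind_{U_\cP}^{U_\cR}(\chi)\big(x(g-1)+1\big),$$
so it suffices to prove that $\Ind_{U_\cP}^{U_\cR}(\chi)\big(x(g-1)+1\big)=\Ind_{U_\cP}^{U_\cR}(\chi)(g)$ for every $x\in U_\cR$ and every $g\in U_\cR$. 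Since $\Ind_{U_\cP}^{U_\cR}(\chi)$ is a class function, it would be enough to show that $x(g-1)+1$ is conjugate in $U_\cR$ to $g$; but in fact it is cleaner to aim for the stronger statement that $x(g-1)+1$ lies in the same $U_\cR$-$U_\cR$ double coset operation as $g$ in a way that preserves the value of any induced character — the natural route is to show $x(g-1)+1$ and $g$ are actually \emph{conjugate}.

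First I would use Lemma~\ref{LeftRightCosetRepresentatives}: every $x\in U_\cR$ factors as $x=l u$ with $l\in I$ and $u\in U_\cP$ (writing $U_\cR=I\,U_\cP$ as left coset representatives). Then
$$x(g-1)+1 = lu(g-1)+1 = l\big(u(g-1)+1\big) - l + 1 = 1 + (l-1)\big(u(g-1)\big) + u(g-1).$$
By the hypothesis applied with the element $u(g-1)+1\in U_\cR$ — here one needs $(l-1)(v-1)=0$ for the specific $v=u(g-1)+1$, which is exactly an instance of the assumption since $u(g-1)+1\in U_\cR$ — the cross term $(l-1)u(g-1)=(l-1)\big((u(g-1)+1)-1\big)$ vanishes, so $x(g-1)+1 = 1+u(g-1)$. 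Thus the outer $I$-part of $x$ disappears entirely, and we are left with $x(g-1)+1 = u(g-1)+1$ for some $u\in U_\cP$. Since $u\in U_\cP\subseteq U_\cR$, the element $u(g-1)+1 = u g - u + 1$ needs to be recognized as conjugate to $g$: write $u(g-1)+1 = u(g-1)u^{-1}\cdot u + (1-u) = \ldots$; more directly, $1+u(g-1)$ and $1+(g-1)u$ have the same $U_\cR$-character value because $1+u(g-1) = u\big(1+(g-1)u\big)u^{-1}$ after a short manipulation, and $1+(g-1)u$ is $\ldots$ — actually the cleanest finish is: for $u\in U_\cP$, both $1+u(g-1)$ and $g$ lie in the same superclass of $U_\cR$ is too strong; instead I would invoke that $\Ind_{U_\cP}^{U_\cR}(\chi)$, being an \emph{induced} superclass function, is evaluated via averaging $\dot\chi$ over $U_\cR$-conjugates, and check directly from the definition of $\Ind$ that $\Ind(\chi)(1+u(g-1)) = \Ind(\chi)(g)$ using the substitution $y\mapsto u^{-1}y$ inside the sum $\frac{1}{|U_\cP|}\sum_{y\in U_\cR}\dot\chi(y^{-1}(1+u(g-1))y)$.

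Carrying out that last substitution: $\frac{1}{|U_\cP|}\sum_{y}\dot\chi\big(1+y^{-1}u(g-1)y\big)$, and replacing $y$ by $uy$ gives $\frac{1}{|U_\cP|}\sum_{y}\dot\chi\big(1+y^{-1}(g-1)uy\big)$. This is not obviously $\Ind(\chi)(g)$ unless $\dot\chi$ is insensitive to left-versus-right multiplication by $u\in U_\cP$ — which is precisely where the fact that $\chi$ is a \emph{superclass function of $U_\cP$} enters: $1+y^{-1}(g-1)uy$ and $1+y^{-1}(g-1)y$, whenever either lies in $U_\cP$, differ by right multiplication of $(g-1)$ by $u\in U_\cP$, hence lie in the same two-sided $U_\cP$-orbit on $\fkn_\cP$ and so $\dot\chi$ agrees on them. (One must also check the membership conditions match up, i.e. $1+y^{-1}(g-1)uy\in U_\cP \iff 1+y^{-1}(g-1)y\in U_\cP$, which again follows because these elements are related by the $U_\cP$-two-sided action that stabilizes $U_\cP$.) Assembling these pieces yields $\Ind(\chi)(x(g-1)+1)=\Ind(\chi)(g)$ for all $x\in U_\cR$, and plugging back into Lemma~\ref{TwistedInduction} gives $\SInd_{U_\cP}^{U_\cR}(\chi)=\Ind_{U_\cP}^{U_\cR}(\chi)$.

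The main obstacle I anticipate is the bookkeeping in the final step: tracking exactly when the relevant matrices land in $U_\cP$ versus merely in $U_\cR$, so that the vanishing $(l-1)(u(g-1)+1 -1)=0$ can legitimately be invoked and so that the $U_\cP$-superclass-invariance of $\chi$ applies to the right elements. An alternative, possibly smoother, organization would be to verify conditions (1) and (2) of Theorem~\ref{AlgebraGroupSuperinductionIsInduction} directly — using Lemma~\ref{LeftRightCosetRepresentatives} to handle the double-coset structure and the hypothesis $(l-1)(u-1)=0$ to kill the obstruction terms exactly as in the proof of Theorem~\ref{SemidirectProductSuperinduction} — but condition (1) (no two superclasses of $U_\cP$ fuse in $U_\cR$) may fail or be awkward here, so I expect the route through Lemma~\ref{TwistedInduction} to be the robust one.
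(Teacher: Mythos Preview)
Your overall strategy via Lemma~\ref{TwistedInduction} is reasonable and different from the paper's direct computation, and the first reduction --- factoring $x=lu$ with $l\in I$, $u\in U_\cP$ and using the hypothesis on $l$ and $v=1+u(g-1)\in U_\cR$ to get $x(g-1)+1=1+u(g-1)$ --- is correct. The problem is the second half.

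Your claim that $\dot\chi\big(1+y^{-1}(g-1)uy\big)=\dot\chi\big(1+y^{-1}(g-1)y\big)$ because the two arguments ``lie in the same two-sided $U_\cP$-orbit on $\fkn_\cP$'' is not justified. Writing $a=y^{-1}(g-1)y$, the other element is $a\cdot(y^{-1}uy)$; but $y\in U_\cR$ and $U_\cP$ is \emph{not} assumed normal in $U_\cR$, so $y^{-1}uy$ need not lie in $U_\cP$, and neither the membership equivalence nor the superclass equality follows. This is exactly the gap you flagged as ``bookkeeping,'' and it is not merely bookkeeping --- as stated it is false in general.

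The fix is to use the hypothesis a \emph{second} time, now on $y^{-1}$. Decompose $y=l'u'$ with $l'\in I$, $u'\in U_\cP$ (Lemma~\ref{LeftRightCosetRepresentatives}). Since $l'^{-1}\in I$ and $1+u(g-1)\in U_\cR$, the hypothesis gives $l'^{-1}u(g-1)=u(g-1)$, so
\[
y^{-1}u(g-1)y \;=\; u'^{-1}l'^{-1}u(g-1)l'u' \;=\; u'^{-1}u\,(g-1)\,l'\,u'.
\]
Now $u'^{-1}u,\,u'\in U_\cP$ act on the two sides, so $\dot\chi\big(1+u'^{-1}u(g-1)l'u'\big)=\dot\chi\big(1+(g-1)l'\big)$ legitimately by $U_\cP$-superclass invariance, and the membership equivalence holds for the same reason. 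Summing gives $\Ind(\chi)\big(1+u(g-1)\big)=\sum_{l'\in I}\dot\chi\big(1+(g-1)l'\big)$, independent of $u$; taking $u=1$ shows this equals $\Ind(\chi)(g)$, and Lemma~\ref{TwistedInduction} finishes.

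For comparison, the paper bypasses Lemma~\ref{TwistedInduction} entirely: it works from the definition of $\SInd$, decomposes \emph{both} $x$ and $y$ via $I$ and $U_\cP$ at once, strips the $U_\cP$-factors using that $\chi$ is a superclass function, and then uses the identity $1+l(g-1)r=r^{-1}gr$ for $l,r\in I$ (which follows from $rl\in I$ and the hypothesis) to land directly on the coset formula $\sum_{r\in I}\dot\chi(r^{-1}gr)=\Ind(\chi)(g)$. Your corrected argument and the paper's are essentially the same computation organized differently; the paper's version is a bit shorter because it handles the two $I$-factors symmetrically in one step rather than peeling them off sequentially.
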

\begin{proof}   First note that if $l,r\in I$, then by our assumption
$$lr=1+(l-1)+(r-1)+(l-1)(r-1)=1+(l-1)+(r-1)\in I.$$
Thus, $I$ is a subgroup and abelian ($l^{-1}=1+(1-l)\in I$).  

By assumption, for $l,r\in I$ and $u\in U_\cR$, 
\begin{align}
1+l(u-1)r&=r^{-1}r+r^{-1}rl(u-1)r=r^{-1}\big[(u-1)+(rl-1)(u-1)\big]r=1+r^{-1}(u-1)r \notag\\
&=r^{-1}ur. \label{LeftRightInvarianceEquation}
\end{align}
If $\chi$ a superclass function of $U_\cP$, then
\begin{align*}
\SInd_{U_\cP}^{U_\cR}(\chi)(u)&=\frac{1}{|U_\cP||U_\cR|}\sum_{x,y\in U_\cR} \dot\chi(x(u-1)y+1), && \text{by definition,}\\
&=\frac{1}{|U_\cP||U_\cR|}\sum_{l,r\in I}\sum_{h,k\in U_\cP} \dot\chi(hl(u-1)rh'+1),& & \text{by Lemma \ref{LeftRightCosetRepresentatives},}\\
&=\frac{1}{|U_\cP||U_\cR|}\sum_{l,r\in I}\sum_{h,k\in U_\cP} \dot\chi(l(u-1)r+1)\\
&=\frac{1}{|I|}\sum_{l,r\in I} \dot\chi(l(u-1)r+1)\\
&=\sum_{r\in I} \dot\chi(r^{-1}(u-1)r), && \text{by (\ref{LeftRightInvarianceEquation}),}\\
&=\Ind_{U_\cP}^{U_\cR}(\chi)(u),
\end{align*}
as desired.
\end{proof}

\noindent\textbf{Remark.} Using the arguments in Section \ref{SectionLeftRightSymmetry}, we can replace the condition in Theorem \ref{CosetRepresentativesSuperinduction} by, ``If $(u-1)(r-1)=0$ for all $r\in I$, $u\in U_\cR$ if we switch from left modules to right modules.

\vspace{.25cm}

\noindent\textbf{Example.}  Fix $n\geq 1$.  For $0\leq m\leq n$, let
\begin{equation*}
U_{(m)}  =\{u\in U_n\ \mid\  u_{1j}=0, \text{ for $j\leq m$}\}=U_{\cP_{(m)}},
\qquad\text{where}
\qquad\cP_{(m)}=\xy<0cm,2.2cm>\xymatrix@R=.25cm@C=.3cm{*={} & *+{n}\ar @{-} [d]\\ & *{\vdots}\ar @{-} [d] \\ & *+{m+1} \ar @{-} [d] \ar @{-} [dl]\\ *+{1} & *+{m}\ar @{-} [d] \\ & *+{m-1} \ar @{-} [d]\\ & *{\vdots}\ar @{-} [d]\\ & *+{2}}\endxy.
\end{equation*}
Note that 
$$U_{n-1}\cong U_{(n)} \triangleleft U_{(n-1)} \triangleleft \cdots \triangleleft U_{(1)} \triangleleft U_{(0)} = U_n,$$
and that all these groups satisfy the hypotheses of Theorem \ref{CosetRepresentativesSuperinduction} within one-another.

\section{Superinduction for $U_n$}

This section computes the superinduced characters from $U_m\subseteq U_{n}$, where
$U_n$ is the group of $n\times n$ unipotent upper-triangular matrices over $\FF_q$.   For this case, superinduction is the same as induction, and the theory gives rise to beautiful combinatorial  algorithms for adding elements to labeled set-partitions .

\subsection{An embedding of $U_{n-1}$ in $U_n$.}

Let
\begin{align*} U_{n-1}&=\{u\in U_n\ \mid\ u_{ij}\neq  0\text{ implies $i<j<n$}\},\\
\fkn_{n-1}^*&=U_{n-1}-1,\\
\fkn_n^*&=U_n-1.
\end{align*}
Note that $$U_n=U_{n-1}\ltimes U_\cP,\qquad\text{where}\qquad 
\cP=\xy<0cm,.5cm>\xymatrix@R=.75cm@C=.5cm{*={} & *={} & *+{n}\\ *{1} \ar @{-} [urr] & *{2} \ar @{-} [ur] \ar @{} [rr]^{\cdots} & & *{n-1}\ar @{-} [ul]}\endxy\ .$$
The pattern group $U_\cP$ is abelian, so its supercharacters are in bijection with $\fkn_\cP^*$.  

If $k\in U_\cP$ and $u\in U_{n-1}$ and $g=ku\in U_n$ then
$$g_{ij}=\left\{\begin{array}{ll} k_{ij}, & \text{if $j=n$,}\\ u_{ij}, & \text{if $j<n$.}\end{array}\right.$$
It follows that $(k-1)(g-1)=0$ for all $k\in U_\cP$ and $g\in U_n$.  Thus, by 
Theorem \ref{CosetRepresentativesSuperinduction}, 
$$\Ind_{U_{n-1}}^{U_n}(V^\mu)=\SInd_{U_{n-1}}^{U_n}(V^\mu), \qquad \text{for all $\mu\in \fkn_{n-1}^*$},$$
where $V^\mu$ is the supermodule corresponding to $\mu\in \fkn_{n-1}^*$.

\subsection{Induction from $U_{n-1}$ to $U_n$}

The following theorem gives a basis for the induced modules from $U_{n-1}$ to $U_n$.

\begin{theorem} \label{InducingBasis} Suppose $\mu\in \cS_{n-1}(q)$ with corresponding $U_{n-1}$-supermodule $V^\mu$.  Then
$$\Ind_{U_{n-1}}^{U_n}(V^\mu)\cong\CC\spanning\big\{ v_\lambda\ \mid\ \lambda\in \fkn^*_n,\ \lambda\big|_{U_{n-1}}\in U_{n-1}\mu\big\}.$$
\end{theorem}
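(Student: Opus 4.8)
The plan is to realize the induced module concretely as a space of functions on $U_n$ and then match it up with the span of the indicated $v_\lambda$. Recall from Section~\ref{SectionPatternSupercharacters} that $V^\mu = \CC\spanning\{v_\nu \mid \nu \in U_{n-1}\mu\}$ as a $U_{n-1}$-module, with action $u v_\nu = \theta(\nu(u^{-1}-1)) v_{u\nu}$. First I would fix the set $I = U_\cP$ of coset representatives for $U_n/U_{n-1}$ supplied by Lemma~\ref{LeftRightCosetRepresentatives} (here $\cR/\cP$ is exactly the set of pairs $(i,n)$, so $I = U_\cP$ is abelian of order $q^{n-1}$), so that $\Ind_{U_{n-1}}^{U_n}(V^\mu) = \bigoplus_{l \in I} l \otimes V^\mu$ as a vector space of dimension $q^{n-1}\dim V^\mu = q^{n-1}|U_{n-1}\mu|$.

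Next I would count the right-hand side and see it has the same dimension. An element $\lambda \in \fkn_n^*$ is determined by its last column $(\lambda_{1n},\dots,\lambda_{n-1,n})$ together with its restriction $\lambda|_{U_{n-1}} = \lambda|_{\fkn_{n-1}}$; the condition $\lambda|_{U_{n-1}} \in U_{n-1}\mu$ pins down the restriction to lie in an orbit of size $|U_{n-1}\mu|$, while the last column is free, giving $q^{n-1}|U_{n-1}\mu|$ choices. So the two sides have equal dimension, and it suffices to produce a $U_n$-equivariant surjection (or injection) between them.

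The heart of the argument is the $U_n$-action. I would define a linear map $\Phi \colon \Ind_{U_{n-1}}^{U_n}(V^\mu) \to \CC\spanning\{v_\lambda\}$ by sending the coset-basis vector $l \otimes v_\nu$ (for $l \in I$, $\nu \in U_{n-1}\mu$) to $\theta(\text{correction})\cdot v_{\lambda(l,\nu)}$, where $\lambda(l,\nu) \in \fkn_n^*$ is the unique functional whose restriction to $\fkn_{n-1}$ is $\nu$ and whose last column is read off from $l$ (using the observation in Section~4.1 that for $g = ku$ with $k \in U_\cP$, $u \in U_{n-1}$, the last column of $g$ comes from $k$ and the rest from $u$, so $l\nu$ is well-defined on all of $\fkn_n$). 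To check equivariance I would take $g \in U_n$, write $g l = l' h$ with $l' \in I$, $h \in U_{n-1}$ (the standard bookkeeping for induced modules), and verify that $g\cdot(l\otimes v_\nu) = \theta(\text{phase})\, l' \otimes v_{h\nu}$ maps under $\Phi$ to $\theta(\mu(g^{-1}-1)\text{-type phase})\, v_{g\cdot\lambda(l,\nu)}$, matching the $U_n$-action $g v_\lambda = \theta(\lambda(g^{-1}-1)) v_{g\lambda}$ on the target. The key compatibility is that $g\cdot \lambda(l,\nu) = \lambda(l', h\nu)$ as functionals on $\fkn_n$, which follows because the two-sided $U_n$-action on $\fkn_n^*$ respects the column decomposition, together with the relation $(k-1)(g-1)=0$ that makes the $U_\cP$-part act by pure phases.

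The main obstacle I anticipate is the careful tracking of the $\theta$-phase factors: the induced-module formula contributes one power of $\theta$ from the $U_{n-1}$-action on $V^\mu$ and the cocycle from rewriting $gl = l'h$, while the target formula contributes $\theta(\lambda(g^{-1}-1))$, and these must be reconciled using $\lambda(e_{in}) = l_{in}$ and the multiplicativity of $\theta$ on $\FF_q^+$. Once the phases match, injectivity of $\Phi$ is immediate (distinct basis vectors go to distinct $v_\lambda$ up to nonzero scalars), and equality of dimensions upgrades this to the claimed isomorphism. Everything else — that $\lambda(l,\nu)$ ranges over precisely the stated set as $(l,\nu)$ varies, and that $I$ is abelian so the coset arithmetic simplifies — is routine given Lemma~\ref{LeftRightCosetRepresentatives} and the setup of Section~4.1.
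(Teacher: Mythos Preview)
Your overall architecture --- count dimensions, write down an explicit bijective linear map, and verify $U_n$-equivariance --- is exactly right, and the dimension count is fine. The gap is in the specific map $\Phi$ you propose. Sending a coset-basis vector $l\otimes v_\nu$ (with $l\in I=U_\cP$) to a \emph{single} basis vector $v_{\lambda(l,\nu)}$, where the last column of $\lambda$ records the last column of $l$, cannot be $U_n$-equivariant, no matter what scalar ``correction'' you insert. To see this, test the action of an element $k\in U_\cP$. On the induced side, $k\cdot(l\otimes v_\nu)=(kl)\otimes v_\nu$, so under your $\Phi$ this lands on $v_{\lambda(kl,\nu)}$, whose last column is that of $kl$. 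On the target side, a short computation (using $(k^{-1}-1)(x-1)=0$ for $x\in U_{n-1}$ and $k^{-1}e_{in}=e_{in}$ for $i<n$) shows that $k\lambda(l,\nu)=\lambda(l,\nu)$, so $k\cdot v_{\lambda(l,\nu)}$ is a nonzero scalar times $v_{\lambda(l,\nu)}$ \emph{with the same} $\lambda$. Thus your ``key compatibility'' $g\cdot\lambda(l,\nu)=\lambda(l',h\nu)$ already fails for $g=k\in U_\cP$: the left side is $\lambda(l,\nu)$ while the right side is $\lambda(kl,\nu)\neq\lambda(l,\nu)$ for $k\neq 1$. Group elements $l$ and functionals $\lambda$ simply transform differently under $U_\cP$.

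The paper's proof fixes exactly this by passing to the Fourier-dual basis of $\CC U_\cP$: instead of the coset representatives $l$, it uses the minimal central idempotents
\[
e_\gamma=\frac{1}{q^{n-1}}\sum_{k\in U_\cP}\theta\circ\gamma(k^{-1}-1)\,k,\qquad \gamma\in\fkn_\cP^*,
\]
which satisfy $k e_\gamma=\theta(\gamma(k-1))e_\gamma$. Now $U_\cP$ acts on $e_\gamma\otimes v_\nu$ by a \emph{scalar}, matching its action on $v_\lambda$, and the map $e_\gamma\otimes v_\nu\mapsto v_{(-\gamma)\oplus\nu}$ is easily checked to be equivariant for both $U_\cP$ and $U_{n-1}$ separately (hence for all of $U_n$). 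Your map is related to this one by a Fourier transform over the abelian group $U_\cP$; once you make that change of basis, the phase bookkeeping you anticipated disappears and the verification becomes the two short computations in the paper.
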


\begin{proof}
For $\gamma\in \fkn^*_\cP$, define
$$e_\gamma=\frac{1}{q^{n-1}}\sum_{k\in U_\cP} \theta\circ\gamma(k^{-1}-1)k\in U_\cP.$$
Since the $e_\gamma$ are the minimal central idempotents of $\CC U_\cP$, 
$$\CC U_\cP=\CC\spanning\{e_\gamma\ \mid\ \gamma\in \fkn^*_\cP\}.$$
Then the induced module is
\begin{align*}
\Ind_{U_{n-1}}^{U_n}(V^\mu)&=\CC U_n\otimes_{\CC U_{n-1}} V^\mu\\
&= \CC\spanning\big\{e_\gamma\otimes v_\nu\ \mid\ \gamma\in \fkn_\cP^*,\ \nu\in U_{n-1}\mu\big\}.
\end{align*}
Define
$$\begin{array}{ccc}\vphi: \CC U_n\otimes_{\CC U_{n-1}} V^\mu & \longrightarrow & \CC\spanning\{ v_\lambda\ \mid\ \lambda\in \fkn^*_n, \lambda\big|_{U_{n-1}}\in U_{n-1}\mu\}\\
e_{\gamma}\otimes v_\nu & \mapsto & v_{(-\gamma)\oplus\nu},\end{array}$$
where $\gamma\in \fkn_\cP^*$ and $\nu\in U_{n-1}\mu$, and
$$\begin{array}{rccc} \gamma\oplus \nu:&\fkn_n=\fkn_\cP\oplus\fkn_{n-1} & \longrightarrow & \FF_q\\
& (k-1)+(u-1) & \mapsto & \gamma(k-1)+\nu(u-1),\end{array}$$
The map $\vphi$ is well-defined since $(-\gamma\oplus v\mu)(u-1)=v\mu(u-1)$ for all $u,v\in U_{n-1}$.  It therefore suffices to show that $\vphi$ is a $U_n$-module isomorphism.

For $k\in U_\cP$, 
\begin{equation*}
k\vphi(e_\gamma \otimes v_\nu)=kv_{(-\gamma)\oplus\nu}=\theta\big((-\gamma\oplus\nu)(k^{-1}-1)\big)v_{k(-\gamma\oplus\nu)}=\theta\big(\gamma(k-1)\big)v_{(-\gamma)\oplus\nu},\end{equation*}
since $k\gamma=\gamma$ and $k\nu(u-1)=\nu(k^{-1}(u-1))=\nu(u-1)$ for all $u\in U_{n-1}$.  On the other hand,
$$\vphi(ke_\gamma\otimes v_\nu)=\vphi(\theta(\gamma(k-1)) e_\gamma\otimes v_\nu)=\theta(\gamma(k-1))v_{(-\gamma)\oplus\nu}.$$

Suppose $u\in U_{n-1}$.  It follows from $(k-1)u=(k-1)$  for all $k\in U_\cP$ that
$$ue_\gamma u^{-1}=e_{u\gamma}$$
Thus,
$$\vphi(u e_\gamma\otimes v_\nu)=\vphi( e_{u\gamma}\otimes u v_\nu)=\theta(\nu(u^{-1}-1))\vphi( e_{u\gamma}\otimes v_{u\nu})=\theta(\nu(u^{-1}-1))v_{(-u\gamma)\oplus u\nu}.$$
On the other hand,
$$u\vphi(e_\gamma\otimes v_\nu)=u v_{(-\gamma)\oplus\nu} = \theta(\nu(u^{-1}-1))v_{u(-\gamma\oplus\nu)}= \theta(\nu(u^{-1}-1))v_{(-u\gamma)\oplus u\nu},$$
as desired.
\end{proof}

The following corollary gives a character theoretic version of Theorem \ref{InducingBasis}.  Let $\mu\in \fkn_{n-1}^*$ and $\lambda\in \fkn_n^*$.  Then $\lambda$ is \emph{left minimal over} $\mu$ if
\begin{enumerate}
\item[(1)] $\lambda\big|_{U_{n-1}}=\mu$, and 
\item[(2)] $\lambda_{in},\lambda_{jn}\in \FF_q^\times$ with $i<j$ implies $\lambda_{ik}\neq 0$ for some $k>j$.
\end{enumerate}

\begin{corollary}  \label{SuperCharacterSum} Suppose $\mu\in \cS_{n-1}(q)$ with corresponding $U_{n-1}$-supercharacter $\chi^\mu$.  Then
$$\Ind_{U_{n-1}}^{U_n}(\chi^\mu)=\sum_{\text{$\lambda$ left minimal}\atop \text{over $\mu$}}\chi^\lambda,$$
where the supercharacters $\chi^\lambda$ are not necessarily distinct.  
\end{corollary}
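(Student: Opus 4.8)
The plan is to convert the module-theoretic statement of Theorem~\ref{InducingBasis} into a character identity, and then to sort the basis vectors $v_\lambda$ appearing there into supercharacter modules $V^\lambda$. Recall from Theorem~\ref{InducingBasis} that $\Ind_{U_{n-1}}^{U_n}(V^\mu)$ has a basis indexed by those $\lambda\in\fkn_n^*$ with $\lambda|_{U_{n-1}}\in U_{n-1}\mu$, i.e.\ by pairs $(\gamma,\nu)$ with $\gamma\in\fkn_\cP^*$ and $\nu\in U_{n-1}\mu$, via $\lambda=(-\gamma)\oplus\nu$. Since $\chi^\mu$ is a supercharacter and superinduction equals induction here (by the remark following the embedding of $U_{n-1}$ in $U_n$ via Theorem~\ref{CosetRepresentativesSuperinduction}), the induced class function is a $\ZZ_{\geq0}$-combination of supercharacters; the task is to identify exactly which $\chi^\lambda$ occur and with what multiplicity.

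First I would observe that the $U_n$-orbit of any $\lambda\in\fkn_n^*$ with $\lambda|_{U_{n-1}}\in U_{n-1}\mu$ contains a unique representative that is both row- and column-reduced in the sense of~(\ref{UcoOrbitRepresentatives}); call two such $\lambda,\lambda'$ equivalent if they lie in the same $U_n$-orbit. The span of $\{v_\lambda : \lambda\in U_n\lambda_0\}$ inside $\Ind_{U_{n-1}}^{U_n}(V^\mu)$ is $U_n$-invariant and isomorphic to the supermodule $V^{\lambda_0}$ (up to the sign/orientation conventions of Section~\ref{SectionPatternSupercharacters}), since $U_n$ acts on the index $\lambda$ through its two-sided action twisted by $\theta$, exactly as in the definition of $V^{-\lambda}$. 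Hence $\Ind_{U_{n-1}}^{U_n}(V^\mu)$ decomposes as a direct sum of supermodules $V^\lambda$, one for each $U_n$-orbit meeting $\{\lambda : \lambda|_{U_{n-1}}\in U_{n-1}\mu\}$; choosing the orbit representatives to be the row/column-reduced ones, these are precisely the $\lambda$ satisfying condition~(1) of the corollary together with the reducedness constraint, and the reducedness constraint on the last column (given that $\mu$ is already reduced) is exactly condition~(2): if $\lambda_{in},\lambda_{jn}\neq0$ with $i<j$, then to keep row $i$ reduced we need a nonzero entry $\lambda_{ik}$ with $k>j$ blocking the column operation that would otherwise clear position $(i,n)$ against position $(j,n)$. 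So the $\lambda$ that are left minimal over $\mu$ index the distinct orbits, but a single orbit may be hit by several reduced $\lambda$'s from the larger set $U_{n-1}\mu$ (different choices within the orbit $U_{n-1}\mu$ of the ``tail'' $\nu$), which is the source of the multiplicities — hence the caveat ``not necessarily distinct.''

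The main obstacle is making the orbit-counting precise: one must check that distinct left-minimal $\lambda,\lambda'$ can still lie in the same $U_n$-orbit (so the sum genuinely has repeats) and, conversely, that every $U_n$-orbit appearing in the induced module contains at least one left-minimal representative, and count with correct multiplicity. The cleanest route is dimension-counting: verify that $\sum_{\lambda \text{ left minimal over }\mu}\chi^\lambda(1)$ equals $\chi^\mu(1)\cdot[U_n:U_{n-1}] = q^{n-1}\chi^\mu(1)$, using the degree formula $\chi^\lambda(1)=|U_n\lambda|^{1/?}$... more precisely $\chi^\lambda(1) = |U_\cP\lambda U_\cP|/|U_\cP\lambda|$ specialized appropriately, together with the combinatorial fact that the left-minimal $\lambda$ over $\mu$ partition the index set $\{\lambda : \lambda|_{U_{n-1}}\in U_{n-1}\mu\}$ of size $q^{n-1}|U_{n-1}\mu|$ into blocks of the right sizes. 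Granting Theorem~\ref{InducingBasis} and the equality $\SInd=\Ind$, the corollary then follows by matching the $U_n$-module $\Ind_{U_{n-1}}^{U_n}(V^\mu)$ with $\bigoplus V^\lambda$ over left-minimal $\lambda$ and taking traces. I would relegate the purely combinatorial verification that reduced representatives of the relevant orbits are exactly the left-minimal $\lambda$ to a short separate lemma, since that is where the bookkeeping lives.
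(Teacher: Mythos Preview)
Your overall plan---decompose $\Ind_{U_{n-1}}^{U_n}(V^\mu)$ into its left $U_n$-orbit submodules and pick a canonical representative from each---is exactly the paper's approach. But the execution conflates left $U_n$-orbits with two-sided $U_n\times U_n$-orbits, and this causes two concrete errors.

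First, ``left minimal over $\mu$'' is \emph{not} the same as ``row-and-column reduced in the sense of (\ref{UcoOrbitRepresentatives}).'' A left-minimal $\lambda$ satisfies $\lambda|_{U_{n-1}}=\mu$ exactly (condition~(1)), so its first $n-1$ columns match $\mu$; a two-sided-reduced set-partition representative generally does not restrict to $\mu$. Condition~(2) is the statement that $\lambda$ has the fewest possible nonzero entries in column $n$ among elements of the \emph{left} orbit $U_n\lambda$ that still restrict to $\mu$; the relevant operations here are row operations (the left action), not the column operation you describe. The paper's proof simply asserts: within each left orbit $U_n\lambda$ meeting the index set of Theorem~\ref{InducingBasis}, there is a \emph{unique} element $\lambda_0$ with $\lambda_0|_{U_{n-1}}=\mu$ and minimal support in column $n$, and this $\lambda_0$ is characterized by conditions (1) and (2).

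Second, your account of the multiplicities is backwards. You write that ``distinct left-minimal $\lambda,\lambda'$ can still lie in the same $U_n$-orbit''---but the uniqueness just stated rules this out. Distinct left-minimal $\lambda$'s lie in \emph{distinct} left orbits, hence index distinct submodules $V^\lambda$ of the induced module. The repetitions in $\sum\chi^\lambda$ arise because two distinct left orbits (hence two distinct left-minimal $\lambda$'s) can sit inside the same \emph{two-sided} orbit $U_n\lambda U_n$, so that $V^\lambda\cong V^{\lambda'}$ as abstract modules and $\chi^\lambda=\chi^{\lambda'}$. The ``tail'' $\nu\in U_{n-1}\mu$ is not the source of multiplicity: condition~(1) already pins down $\nu=\mu$.

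Once you correct this, the dimension count is unnecessary; the bijection \{left-minimal $\lambda$\} $\leftrightarrow$ \{left $U_n$-orbits in the basis of Theorem~\ref{InducingBasis}\} gives the module decomposition directly, and taking traces finishes the proof.
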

\begin{proof}  By Theorem \ref{InducingBasis}, $V^\lambda$ is a submodule of $\Ind_{U_{n-1}}^{U_n}(V^\mu)$ if and only if 
$$\lambda\big|_{U_{n-1}}\in U_{n-1}\mu.$$
For each such $\lambda$,
$$V^\lambda=\CC\spanning\{v_{\lambda'}\ \mid\ \lambda'\in U_n\lambda\}.$$
  Note that each module $V^\lambda$ has at least one basis vector $v_{\lambda_0}$ such that $\lambda_0\big|_{U_{n-1}}=\mu$.  Furthermore, there is a unique $\lambda_0$ that has a minimal number of nonzero entries in the $n$th column.  This uniquely defined representative of $V^\lambda$ is left minimal over $\mu$.  

Conversely, every $\lambda$ that is left minimal over $\mu$ satisfies $\lambda\big|_{U_{n-1}}\in U_{n-1}\mu$ and $\lambda$ has the minimal number of nonzero entries in the $n$th column among all $\lambda_0\in U_n\lambda$ such that $\lambda_0\big|_{U_{n-1}}=\mu$.
\end{proof}

Define two products on labeled set partitions by the following rule.  For a labeled set partition $\mu$ and $i,k\in \ZZ_{\geq 1}$, let
\begin{align} \mu *_i \{k\} &= \left\{ \begin{array}{ll} 
\mu\cup\{k\},&\text{if } i=k, \\ 
q(\mu*_{i+1} \{k\}), &\text{if there is }l>k\text{ with } \mu_{il}\neq 0, \\ 
 \mu\big|_{i\larc{\mu_{ik}}k\mapsto i\mid k}*_{i+1}\{k\},&\text{if } \mu_{ik}\neq 0,\\ 
\displaystyle\mu*_{i+1}\{k\}+\sum_{t \in \FF_q^\times} \mu\big|_{i\larc{\mu_{ij}}j\mapsto i\larc{t}k}*_{i+1}\{j\} ,&\text{if there is }j<k\text{ with } \mu_{ij}\neq 0,\\ 
\displaystyle  \mu*_{i+1}\{k\}+\sum_{t \in \FF_q^\times} \mu\big|_{i\mid\mapsto i\larc{t}k},&\text{otherwise,} \end{array}\right.\\
\intertext{and for $j,l\in \ZZ_{\geq 1}$, let}
\{j\} \ast_l \mu&=\left\{\begin{array}{ll} \mu\cup\{l\}, & \text{if $j=l$,}\\ 
q (\{j\} *_{l-1} \mu), & \text{if there is $i<j$ with $\mu_{il}\neq 0$,}\\
\dd \{j\} \ast_{l-1} \mu\big|_{j\larc{\mu_{jl}}l\mapsto j\mid l}, & \text{if $\mu_{jl}\neq 0$,}\\
\dd \{j\}\ast_{l-1}\mu+\sum_{t\in \FF_q^\times} \{k\} \ast_{l-1} \mu\big|_{k\larc{\mu_{kl}}l\mapsto j\larc{t} l}, & \text{if there is $k>j$ with $\mu_{kl}\neq 0$,}\\
\dd \{j\} \ast_{l-1} \mu + \sum_{t\in \FF_q^\times} \mu\big|_{j\mid\mapsto j\larc{t}l}, & \text{otherwise,}\end{array}\right.
\end{align}
where the notation $\mu\big|_{j\larc{\mu_{jl}}l\mapsto j\mid l}$ indicates replacing $j\larc{\mu_{jl}}l$ in $\mu$ with $j\mid l$, and leaving everything else in $\mu$ the same.  For example,
$$\{1\larc{a}3\larc{b}6\larc{c}7\mid 2\larc{d}5\mid 4\}\big|_{3\larc{b}6\mapsto 3\mid 6}=\{1\larc{a}3\mid 2\larc{d}5\mid 4\mid 6\larc{c}7\}.$$

We will extend these product to supercharacters by the conventions
\begin{align*}
\chi^\mu\ast_i \chi^{\{k\}} &=\sum_{\lambda} c_{\mu k}^\lambda \chi^\lambda,\qquad\text{if} \qquad \mu\ast_i \{k\}=\sum_{\lambda}c_{\mu k}^\lambda\lambda \\ 
\chi^{\{j\}}\ast_l\chi^\mu &=\sum_\lambda c_{j\mu}^\lambda\chi^\lambda,\qquad\text{if}\qquad {\{j\}}\ast_l\mu = \sum_\lambda c_{j\mu}^\lambda \lambda.
\end{align*}

\begin{theorem} \label{InductionAlgorithm}
Let $\mu$ be an $\FF_q$-labeled set partition of $\{1,2,\ldots, n-1\}$.  Then
$$\Ind_{U_{n-1}}^{U_n}(\chi^\mu)=\chi^\mu\ast_1 \chi^{\{n\}}.$$
\end{theorem}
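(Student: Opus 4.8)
The plan is to reconcile the algorithmic product $\chi^\mu \ast_1 \chi^{\{n\}}$ with the character-theoretic description of $\Ind_{U_{n-1}}^{U_n}(\chi^\mu)$ supplied by Corollary \ref{SuperCharacterSum}, which says the induced character is the sum $\sum_\lambda \chi^\lambda$ over all $\lambda \in \fkn_n^*$ that are left minimal over $\mu$. Thus it suffices to show that the formal sum $\mu \ast_1 \{n\}$, expanded via the recursion defining $\ast_i$, equals $\sum_\lambda c_{\mu n}^\lambda \lambda$ where $\lambda$ ranges over representatives of $U_n$-orbits having a basis point restricting into $U_{n-1}\mu$, with multiplicity $c_{\mu n}^\lambda$ equal to the number of $U_{n-1}$-orbit elements $\nu \in U_{n-1}\mu$ that extend to $\lambda$ by a choice in the $n$th column, i.e. the number of $\lambda_0 \in U_n\lambda$ with $\lambda_0|_{U_{n-1}} = \mu$. (By Theorem \ref{InducingBasis}, $\Ind_{U_{n-1}}^{U_n}(V^\mu)$ has basis $\{v_\lambda : \lambda|_{U_{n-1}} \in U_{n-1}\mu\}$, and decomposing this into $U_n$-orbits gives exactly these multiplicities.)

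The main step is therefore a careful bookkeeping argument: I would fix $\mu$ and track, as the index $i$ in $\ast_i$ runs from $1$ up to $n$, a ``partial assignment'' of the $n$th column of a prospective $\lambda$ together with a record of how the rows $1, \dots, i-1$ of $\mu$ (or its row-reduced images) have been modified. Each of the six cases of $\mu \ast_i \{k\}$ corresponds to one possibility for row $i$: either $i = k$ and we are done (the part $\{k\}$ is free); or row $i$ of $\mu$ already has an entry to the right of column $k$, in which case the entry $\lambda_{in}$ can be reduced away by a column operation but at the cost of a factor $q$ counting the orbit size (this is the $q(\mu \ast_{i+1}\{k\})$ case); or $\mu_{ik} \neq 0$, so after placing a label in position $(i,n)$ we must row-reduce, splitting off $i \mid k$; or there is an entry $\mu_{ij} \neq 0$ with $j < k$, giving a branch between leaving position $(i,n)$ zero and moving the arc to $i \larc{t} k$ over all $t \in \FF_q^\times$; or finally row $i$ is empty to the left, giving the branch between zero and a fresh arc $i \larc{t} k$. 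Each branch feeds into $\ast_{i+1}$ with the appropriately updated set partition and column target, and I would verify by induction on $n - i$ that the leaves of this recursion are precisely the left-minimal $\lambda$ over $\mu$, each appearing with the correct multiplicity $q^{(\text{number of } q\text{-branches taken})}$.

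The key technical point to nail down is the correspondence between the row/column reduction operations allowed in $\fkn_n$ (items (a), (b) in Section \ref{SectionPatternSupercharacters}) and the combinatorial moves in the recursion; in particular one must check that after placing a putative nonzero entry in column $n$ and reducing, the resulting matrix is again in ``at most one nonzero per row and column'' form, that the bookkeeping factor $q$ correctly counts the size of the resulting $U_n$-orbit intersected with the fiber over $\mu$, and that the left-minimality condition (2) --- namely $\lambda_{in}, \lambda_{jn} \neq 0$ with $i < j$ forces $\lambda_{ik} \neq 0$ for some $k > j$ --- is exactly what prevents double-counting across branches. I expect the main obstacle to be precisely this multiplicity bookkeeping: showing that the coefficient $c_{\mu n}^\lambda$ produced by the recursion agrees with $\#\{\lambda_0 \in U_n\lambda : \lambda_0|_{U_{n-1}} = \mu\}$, since several distinct $\nu \in U_{n-1}\mu$ (equivalently several distinct row-reduced forms encountered during the recursion) may lead to the same orbit representative $\lambda$, and the $q$-powers must be tracked consistently through the cases where an entry in an earlier row ``absorbs'' the column-$n$ entry. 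Once this is established, summing over all branches and invoking Corollary \ref{SuperCharacterSum} together with the identification $\SInd_{U_{n-1}}^{U_n} = \Ind_{U_{n-1}}^{U_n}$ from the previous subsection completes the proof.
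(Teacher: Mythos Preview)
Your plan is essentially the paper's: start from Corollary~\ref{SuperCharacterSum}, enumerate the left-minimal $\lambda$ over $\mu$, and match this enumeration to the recursion defining $\ast_i$ by analyzing the reduction row by row. The paper executes this via induction on $n$ rather than on $n-i$: it only analyzes the first step of the recursion (the split at $i=1$ according to whether $\mu_{1j}\neq 0$ for some $j$) and then recognizes the remaining $\ast_2$ expression as an induction $\Ind_{U_{n-2}'}^{U_{n-1}'}$ for the auxiliary pattern groups $U_{n-1}'=\{u\in U_n: u_{1j}=0\}$ and $U_{n-2}'=\{u\in U_{n-1}': u_{in}=0\}$, to which the theorem applies by the inductive hypothesis on the size of the underlying set. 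This repackaging is cleaner than tracking a partial state through all $i$, and it sidesteps exactly the multiplicity bookkeeping you flag as the main obstacle.

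One point where your description drifts: you speak throughout of filling ``the $n$th column,'' but the second argument $\{k\}$ of $\ast_i$ does not stay at $k=n$. In the branch of case four, $\mu\big|_{i\larc{\mu_{ij}}j\mapsto i\larc{t}n}\ast_{i+1}\{j\}$ shifts the active column from $n$ to $j$; the paper's reduction~(\ref{BigReduction}) shows this explicitly --- the entries $\lambda_{2n},\dots,\lambda_{n-1,n}$ in column $n$ become entries in column $j$ after using $\lambda_{1n}$ and $\mu_{1j}$ to clear them. Consequently the case $\mu_{ik}\neq 0$ (your third case) only arises after such a shift and has nothing to do with ``placing a label in position $(i,n)$''; it records that the shifted column $k$ already hits an existing arc endpoint, which is then detached. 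There are five cases, not six. These are fixable details, but they indicate you have not yet fully internalized how the recursion moves through columns as well as rows; the paper's proof makes this column shift the centerpiece of the argument.
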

\begin{proof}  We induct on $n$, where the base case is clear.
For $\mu\in \fkn_{n-1}^*$, let
$$L_\mu=\{\lambda\in \fkn^*_n\ \mid\ \lambda\text{ is left minimal over $\mu$}\}.$$
By Corollary \ref{SuperCharacterSum},
$$\Ind_{U_{n-1}}^{U_n} (\chi^\mu)=\sum_{\lambda\in L_\mu} \chi^\lambda.$$
We will show that
$$\Ind_{U_{n-1}}^{U_n}(\chi^\mu)=\left\{\begin{array}{ll} \displaystyle\mu*_{2}\{n\}+\sum_{t \in \FF_q^\times} \mu\big|_{1\larc{\mu_{1j}}j\mapsto 1\larc{t}n}*_{2}\{j\} ,&\text{if there is }j<n\text{ with } \mu_{1j}\neq 0,\\ 
\displaystyle  \mu*_{2}\{n\}+\sum_{t \in \FF_q^\times} \mu\big|_{1\mid\mapsto 1\larc{t}n},&\text{otherwise.} \end{array}\right.$$

Suppose $\mu$ has no nonzero entry in the first row.  Then $\lambda\in L_\mu$ either satisfies
\begin{enumerate}
\item[(a)] $\lambda_{1n}\neq 0$ and $\lambda$ has no other nonzero entry in the $n$th column,
\item[(b)] $\lambda_{1n}=0$ and $\lambda$ is left minimal over $\mu$ when we restrict to the pattern groups not including the first row.
\end{enumerate}
Thus, in this case,
\begin{align*}
\Ind_{U_{n-1}}^{U_n} (\chi^\mu)&=\sum_{\lambda\in L_\mu\atop \lambda_{1n}=0} \chi^\lambda+\sum_{\lambda\in L_\mu\atop \lambda_{1n}\in \FF_q^\times} \chi^\lambda\\
&=\Ind_{U_{n-2}'}^{U_{n-1}'}(\chi^\mu)+\sum_{t \in \FF_q^\times} \chi^{\mu\big|_{1\mid\mapsto 1\larc{t}n}},
\end{align*}
where
\begin{align*}
U_{n-1}' &=\{u\in U_n\ \mid\ u_{1j}=0, 1< j\leq n\}\\
U_{n-2}' &=\{u\in U_{n-1}'\ \mid\ u_{ik}=0, k=n\}.
\end{align*}
However, by induction on the size of $\{1,2,\ldots, n\}$, we have that
$$\Ind_{U_{n-2}'}^{U_{n-1}'}(\chi^\mu)=\chi^\mu\ast_2 n,$$
as desired.

Suppose that $\mu_{1j}\neq 0$ for some $1<j<n$.  Then
\begin{equation}\label{SplittingOne}
\sum_{\lambda\in L_\mu}\chi^\lambda=\sum_{\lambda\in L_\mu\atop \lambda_{1n}=0} \chi^\lambda+\sum_{\lambda\in L_\mu\atop\lambda_{1n}\in \FF_q^\times} \chi^\lambda.
\end{equation}
Consider the first sum, and  let $\mu'$ be the same as $\mu$ except with $\mu'_{1j}=0$.  Then
$$\Ind_{U_{n-2}'}^{U_{n-1}'}(\chi^{\mu'})=\sum_{\lambda' \text{ left minimal}\atop\text{over $\mu'$}} \chi^{\lambda'}=\chi^{\mu'}\ast_2 \chi^{\{n\}},$$
by induction.  Since $\lambda_{1n}=0$, any row and column reducing will not affect the first row, it follows that
$$\chi^\mu\ast_2 \chi^{\{n\}}=\sum_{\lambda\in L_\mu\atop \lambda_{1n}=0} \chi^\lambda.$$

Consider the second sum on the RHS in (\ref{SplittingOne}), and let $\lambda\in L_\mu$ such that $\lambda_{1n}\neq 0$.  Since $\lambda_{1n}, \lambda_{1j}\in \FF_q^\times$, we can reduce 
\begin{equation}\label{BigReduction}
\left(\begin{array}{cccc|c|cc} 
& &  & & \mu_{1j} &  & \lambda_{1n}\\  
& &  & &  & &\lambda_{2n} \\
& &  & & & & \vdots \\
& &  & & & &\lambda_{n-1,n} \\
& &  & & & & 0\\
& &  & & & & \end{array}\right)\qquad\text{to}\qquad \lambda'=\left(\begin{array}{cccc|c|cc} 
& &  & & 0 &  & \lambda_{1n}\\  
& &  & & -\mu_{1j}\lambda_{1n}^{-1}\lambda_{2n} & & 0 \\
& &  & & \vdots & & 0\\
 & &  & & -\mu_{1j}\lambda_{1n}^{-1}\lambda_{n-2,n} & & \vdots \\
& &  & & 0 & & 0 \\
 & &   & & & & 0
  \end{array}\right).\end{equation}
Let $\lambda'$ denote this reduction applied to $\lambda$, so
$$\sum_{\lambda\in L_\mu\atop\lambda_{1n}\in \FF_q^\times} \chi^\lambda=\sum_{\lambda\in L_\mu\atop\lambda_{1n}\in \FF_q^\times} \chi^{\lambda'},$$
Note that this has the effect of replacing $1\larc{\mu_{1j}} j$ by $1\larc{\lambda_{1n}} n$, and no further row or column operations will affect the first row or last column.  

If $\mu_{2k}\neq 0$ for some $k>j$, then we can reduce
$$\lambda'=\left(\begin{array}{ccccccc} 
& &  & & 0 &  & \lambda_{1n}\\  \hline
& &  & & -\mu_{1j}\lambda_{1n}^{-1}\lambda_{2n} & \mu_{2k} & 0 \\ \hline
& &  & & \vdots & & 0\\
 & &  & & -\mu_{1j}\lambda_{1n}^{-1}\lambda_{n-2,n} & & \vdots \\
& &  & & 0 & & 0 \\
 & &   & & & & 0
  \end{array}\right)\ \text{to}\ \lambda''=\left(\begin{array}{ccccccc} 
& &  & & 0 &  & \lambda_{1n}\\  \hline
& &  & & 0 & \mu_{2k} & 0 \\ \hline
& & & & -\mu_{1j}\lambda_{1n}^{-1}\lambda_{3n}  & & 0\\
& &  & & \vdots & & 0\\
 & &  & & -\mu_{1j}\lambda_{1n}^{-1}\lambda_{n-2,n} & & \vdots \\
& &  & & 0 & & 0 \\
 & &   & & & & 0
  \end{array}\right)$$
That is, in this case, every possible value of $\lambda_{2n}$ leads to the same reduced matrix.  In particular,
$$\sum_{\lambda\in L_\mu\atop\lambda_{1n}\in \FF_q^\times} \chi^{\lambda'}=q\sum_{\lambda\in L_\mu\atop\lambda_{1n}\in \FF_q^\times,\lambda_{2n}=0} \chi^{\lambda''},$$
and any further reductions will not affect the second row.  

If $\mu_{2k}=0$ for all $j<k<n$, then we proceed reducing as before, replacing $1$ by $2$ and $n$ by $j$.  Combining these steps we obtain that
$$\sum_{\lambda\in L_\mu\atop\lambda_{1n}\in \FF_q^\times} \chi^\lambda=\sum_{t\in \FF_q^\times} \mu\big|_{i\larc{\mu_{ij}}j\mapsto i\larc{t}k}*_{2}j,$$
as desired.
\end{proof}

\noindent\textbf{Example.}  If $\FF_q=\FF_2$, then we obtain a Pieri-type product on set partitions using Theorem \ref{InductionAlgorithm} as follows. Let $\{1,2,\ldots,n-1\}=A\cup B$ with $A\cap B=\emptyset$ and $B=\{b_1<b_2<\cdots<b_m\}$.  For $\lambda=\{\lambda_1\mid\lambda_2\mid\cdots\mid\lambda_k\}$ of $A$, define recursively
\begin{align*}
\lambda\ast_{i} &\{b_1\larc{}b_2\larc{}\cdots \larc{} b_m\}\\ 
&=2^{|\{j\larc{}k\in \lambda\ \mid\ i\leq j<b_1<k\}|} \{\lambda\mid b_1\larc{}b_2\larc{}\cdots \larc{} b_m\}+\sum_{a\mid\in \lambda\atop i<a<b_1} \lambda\bigg|_{a\mid\mapsto a\larc{}b_1\larc{}b_2\larc{}\cdots\larc{} b_m\mid}\\
&\hspace*{.25cm}+\sum_{a_1\larc{}a_2\larc{}\cdots\larc{}a_r\mid\in \lambda\atop i<a_1<a_2<b_1} 2^{|\{j\larc{}k\in \lambda\ \mid\ i\leq j<a_1<b_1<k\}|}   \lambda\bigg|_{a_1\larc{}a_2\larc{}\cdots\larc{}a_r\mid\mapsto a_1\larc{}b_1\larc{}\cdots\larc{t}b_m\mid} \ast_{a_1+1}  \{a_2\larc{}\cdots\larc{}a_r\}.
\end{align*}
For example,
\begin{align*}
\{1\larc{}4&\larc{}6 \big| 2 \big| 3\larc{}5\} \ast_{1}\{7\}=\{1\larc{}4\larc{}6 \big| 2 \big| 3\larc{}5\big|7\}\\
&+ \big(\{1\larc{}4\larc{}6\larc{}7 \big| 2 \big| 3\larc{}5\}+\{1\larc{}4\larc{}6 \big| 2\larc{} 7 \big| 3\larc{}5\}+\{1\larc{}4\larc{}6 \big| 2 \big| 3\larc{}5\larc{}7\}\big)\\
&+\{1\larc{} 7 \big| 2 \big| 3\larc{}5\}\ast_{2}\{4\larc{}6\}+ \{1\larc{}4\larc{} 7 \big| 2 \big| 3\larc{}5\}\ast_{5} \{6\}
+\{1\larc{}4\larc{}6 \big| 2 \big| 3\larc{} 7\}\ast_{4} \{5\},
\end{align*}
where
\begin{align*}
\{1\larc{} 7 \big| 2 \big| 3\larc{}5\} \ast_{2} \{4\larc{}6\}&= 2\{1\larc{} 7 \big| 2 \big| 3\larc{}5\big|4\larc{}6\}+\{1\larc{} 7 \big| 2\larc{}4\larc{}6 \big| 3\larc{}5\}\\
\{1\larc{}4\larc{} 7 \big| 2 \big| 3\larc{}5\}\ast_{5} \{6\}&=\{1\larc{}4\larc{} 7 \big| 2 \big| 3\larc{}5\big|6\}+\{1\larc{}4\larc{} 7 \big| 2 \big| 3\larc{}5\larc{} 6\}\\
\{1\larc{}4\larc{}6 \big| 2 \big| 3\larc{} 7\}\ast_{4} \{5\}&=2\{1\larc{}4\larc{}6 \big| 2 \big| 3\larc{} 7\big| 5\}.
 \end{align*}
 Thus,
 \begin{align*}\Ind_{U_6}^{U_7}(\chi^{\{1\larc{}4\larc{}6 \mid 2 \mid 3\larc{}5\}})&= \chi^{\{1\larc{}4\larc{}6 \mid 2 \mid 3\larc{}5\mid 7\}}+\chi^{\{1\larc{}4\larc{}6\larc{}7 \mid 2 \mid 3\larc{}5\}}+\chi^{\{1\larc{}4\larc{}6 \mid 2\larc{} 7 \mid 3\larc{}5\}}+\chi^{\{1\larc{}4\larc{}6 \mid 2 \mid 3\larc{}5\larc{}7\}}\\
& +2\chi^{\{1\larc{} 7 \mid 2 \mid 3\larc{}5\mid 4\larc{}6\}}+ \chi^{\{1\larc{} 7 \mid 2\larc{}4\larc{}6 \mid 3\larc{}5\}} +\chi^{\{1\larc{}4\larc{} 7 \mid 2 \mid 3\larc{}5\mid 6\}}+\chi^{\{1\larc{}4\larc{} 7 \mid 2 \mid 3\larc{}5\larc{} 6\}}\\
&+2\chi^{\{1\larc{}4\larc{}6 \mid 2 \mid 3\larc{} 7\mid 5\}}.
\end{align*}
There is a similar formula for arbitrary $q$ obtained by summing over all $\sum_{t\in\FF_q^\times} a\larc{t}b$ whenever we add an arc.   

\begin{corollary} Let $\mu\in \cS_{m}(q)$.  Then
$$\Ind_{U_m}^{U_n}(\chi^\mu)=\chi^\mu\ast_1 \chi^{\{m+1\}}\ast_1 \chi^{\{m+2\}}\ast_{1} \cdots\ast_{1} \chi^{\{n\}}.$$
\end{corollary}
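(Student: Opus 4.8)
The plan is to iterate Theorem~\ref{InductionAlgorithm} using transitivity of induction, inserting the successive intermediate groups $U_m \subseteq U_{m+1} \subseteq \cdots \subseteq U_n$. First I would observe that $\Ind_{U_m}^{U_n} = \Ind_{U_{n-1}}^{U_n} \circ \Ind_{U_{n-2}}^{U_{n-1}} \circ \cdots \circ \Ind_{U_m}^{U_{m+1}}$, where each $U_k$ is embedded in $U_{k+1}$ as in Section~4.1 (the upper-left $k \times k$ block). Since each of these one-step inductions equals the corresponding superinduction (by the computation at the end of Section~4.1 applied at each stage, via Theorem~\ref{CosetRepresentativesSuperinduction}), induction takes supercharacters to nonnegative integer combinations of supercharacters at every stage, so the composite is well-defined on the level of supercharacters and we may apply Theorem~\ref{InductionAlgorithm} repeatedly.

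Next I would carry out the induction on $n - m$. The base case $n = m+1$ is precisely Theorem~\ref{InductionAlgorithm} with the added element $\{m+1\}$. For the inductive step, assume
$$\Ind_{U_m}^{U_{n-1}}(\chi^\mu) = \chi^\mu \ast_1 \chi^{\{m+1\}} \ast_1 \cdots \ast_1 \chi^{\{n-1\}},$$
a $\ZZ_{\geq 0}$-combination $\sum_\lambda c^\lambda \chi^\lambda$ of supercharacters $\chi^\lambda$ of $U_{n-1}$. Applying $\Ind_{U_{n-1}}^{U_n}$ and using linearity of induction together with Theorem~\ref{InductionAlgorithm} for each term gives
$$\Ind_{U_m}^{U_n}(\chi^\mu) = \sum_\lambda c^\lambda\, \Ind_{U_{n-1}}^{U_n}(\chi^\lambda) = \sum_\lambda c^\lambda\, \bigl(\chi^\lambda \ast_1 \chi^{\{n\}}\bigr) = \Bigl(\sum_\lambda c^\lambda \chi^\lambda\Bigr) \ast_1 \chi^{\{n\}},$$
where the last equality is just the definition of $\ast_1$ extended $\ZZ_{\geq 0}$-linearly to supercharacters. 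Substituting the inductive hypothesis yields $\chi^\mu \ast_1 \chi^{\{m+1\}} \ast_1 \cdots \ast_1 \chi^{\{n-1\}} \ast_1 \chi^{\{n\}}$, which is the claim.

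The only genuine point requiring care — and the step I would flag as the main obstacle — is checking that applying $\ast_1 \chi^{\{n\}}$ term-by-term to a sum $\sum_\lambda c^\lambda \chi^\lambda$ really does compute $\Ind_{U_{n-1}}^{U_n}$ of that sum, i.e. that the combinatorial operation $\ast_1$ is defined so as to be additive over formal sums of labeled set partitions and that Theorem~\ref{InductionAlgorithm} applies to each $\chi^\lambda$ individually regardless of how it arose. This is immediate from the conventions $\chi^\mu \ast_i \chi^{\{k\}} = \sum_\lambda c_{\mu k}^\lambda \chi^\lambda$ set up before Theorem~\ref{InductionAlgorithm} and from $\ZZ$-linearity of the induction functor, so no real difficulty remains; one only needs to note that every $\chi^\lambda$ occurring (with its multiplicity) is a genuine supercharacter of $U_{n-1}$ indexed by an element of $\cS_{n-1}(q)$, which is guaranteed because at each stage superinduction equals induction and hence outputs characters. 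I would conclude by remarking that associativity of the bracketing is not needed — the product is simply read left to right as written.
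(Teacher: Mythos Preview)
Your argument is correct and is exactly the intended one: the paper states the corollary without proof immediately after Theorem~\ref{InductionAlgorithm}, and it follows by transitivity of induction together with the one-step formula applied termwise, just as you describe. The only remark worth adding is that the nonnegativity and integrality of the intermediate coefficients $c^\lambda$ is already visible from the explicit recursive formula for $\ast_1$ (each step produces a $\ZZ_{\geq 0}$-combination of labeled set partitions), so you do not even need to invoke the abstract fact that superinduction equals induction to justify applying Theorem~\ref{InductionAlgorithm} at each stage.
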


The paper \cite{TVe07} proves the following Theorem.

\begin{theorem}  For $\lambda\in \cS_n(q)$, 
$$\Res_{U_{n-1}}^{U_n}(\chi^\lambda)=\left\{\begin{array}{ll} \chi^{\{i\}}\ast_n \chi^\lambda, & \text{if $\lambda_{in}\neq 0$,}\\ \chi^{\lambda\big|_{U_{n-1}}}, & \text{otherwise.}\end{array}\right.$$
\end{theorem}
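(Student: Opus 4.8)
The plan is to treat the two cases of the formula separately. The ``otherwise'' case, where $\lambda$ has a zero $n$th column (equivalently $n$ is a singleton part), I would settle directly by comparing orbits; the case $\lambda_{in}\neq 0$ I would handle by decomposing the restricted supermodule along $U_{n-1}$-orbits and matching the result, term by term, against the recursive definition of $\ast_n$, organized as an induction on $n$ in the spirit of the proof of Theorem~\ref{InductionAlgorithm}.

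In the easy case, $\lambda$ viewed as a functional vanishes on all of $\fkn_n$ outside $\fkn_{n-1}$. For $u\in U_n$ the left-orbit element $u(-\lambda)$ is already realized by the top-left $(n-1)\times(n-1)$ block of $u$, which lies in $U_{n-1}$ (its $n$th column is zero, and whatever appears in the $n$th row is truncated away). Hence $U_n(-\lambda)=U_{n-1}(-\lambda)$ as sets of functionals, so the supermodule of $\chi^\lambda$ has the same underlying space whether formed over $U_n$ or over $U_{n-1}$, and the two $U_{n-1}$-actions agree since $\mu(u^{-1}-1)$ depends only on $\mu|_{U_{n-1}}$. Therefore $\Res_{U_{n-1}}^{U_n}(\chi^\lambda)=\chi^{\lambda|_{U_{n-1}}}$.

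For the main case, let $V$ be the supermodule of $\chi^\lambda$, with basis $\{v_\mu:\mu\in U_n(-\lambda)\}$, and write $V=\bigoplus_O W_O$ as a $U_{n-1}$-module, the sum over $U_{n-1}$-orbits $O\subseteq U_n(-\lambda)$ with $W_O=\CC\spanning\{v_\mu:\mu\in O\}$. Two structural facts drive the computation: restriction of functionals $\mu\mapsto\mu|_{U_{n-1}}$ is $U_{n-1}$-equivariant (for $u\in U_{n-1}$, $(u\mu)|_{U_{n-1}}=u(\mu|_{U_{n-1}})$ since $u^{-1}(v-1)\in\fkn_{n-1}$ for $v\in U_{n-1}$), and $\mu(u^{-1}-1)$ depends only on $\mu|_{U_{n-1}}$. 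Together they identify $W_O$ with the module induced up to $U_{n-1}$ from a $\theta$-twisted representation of $\mathrm{Stab}_{U_{n-1}}(\mu_O)$, for a chosen $\mu_O\in O$, while the standard supermodule $V^{-(\mu_O|_{U_{n-1}})}$ is the same kind of induction from the possibly larger group $\mathrm{Stab}_{U_{n-1}}(\mu_O|_{U_{n-1}})$; when the two stabilizers coincide $W_O$ is that standard supermodule, and when the first is strictly smaller — which occurs exactly when the unique nonzero entry $\lambda_{in}$ of column $n$ can be moved, by the operations allowed inside $U_n$ but fixing $U_{n-1}$, into interaction with the rest of row $i$ or with the rows above $i$ — the discrepancy is a smaller instance of the very same restriction problem, unwound by recursion. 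Choosing a transversal of the $U_{n-1}$-orbits in $U_n(-\lambda)$ by reducing column $n$ along the same case split that defines $\{i\}\ast_l\mu$ (whether row $i$ has another nonzero entry to the right or to the left of the active index, whether column $n$ picks up entries in rows above $i$, or otherwise), and recording the multiplicities — which emerge as powers of $q$, being indices of subgroups of a $q$-group — should then give $\Res_{U_{n-1}}^{U_n}(\chi^\lambda)=\chi^{\{i\}}\ast_n\chi^\lambda$.

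The main obstacle is precisely this last bookkeeping: checking that the transversal produced by the column reduction, with its $q$-power multiplicities and its $\FF_q^\times$-labels on newly created arcs, is reproduced branch by branch by the recursion defining $\ast_l$ — the ``$q(\,\cdot\,)$'' branch tracking the stabilizer jumps, the $\sum_{t\in\FF_q^\times}$ branches the relabelings of arcs, the branch $\mu_{jl}\neq 0$ the collision of $\lambda_{in}$ with another entry of its row (which turns $n$ into a singleton, discarded on restriction), and the base case the terminating step. As an alternative route and a consistency check, one can argue by Frobenius reciprocity: since $\SInd_{U_{n-1}}^{U_n}=\Ind_{U_{n-1}}^{U_n}$, restriction is adjoint to induction on superclass functions; the supercharacters form an orthogonal family with norms $\langle\chi^\mu,\chi^\mu\rangle$ known to be powers of $q$; Corollary~\ref{SuperCharacterSum} together with Theorem~\ref{InductionAlgorithm} computes $\langle\chi^\lambda,\Ind_{U_{n-1}}^{U_n}(\chi^\mu)\rangle$; and dividing by these norms recovers the coefficients of $\Res_{U_{n-1}}^{U_n}(\chi^\lambda)$. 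The left--right symmetry of Section~\ref{SectionLeftRightSymmetry} is what re-expresses the resulting Pieri rule, naturally stated via the left product $\ast_1$, in the claimed form with the right product $\ast_n$.
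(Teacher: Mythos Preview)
The paper does not prove this theorem. It is quoted verbatim from the companion paper \cite{TVe07} (``The paper \cite{TVe07} proves the following Theorem''), and no argument is supplied here; the present paper only uses it, via Frobenius reciprocity, to deduce the subsequent corollary relating $\ast_1$ and $\ast_n$. So there is no ``paper's own proof'' to compare your proposal against.

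That said, a brief remark on your sketch. Your easy case is fine. Your main case is not a proof but a plan, and you say as much: the decomposition $V=\bigoplus_O W_O$ and the identification of each $W_O$ with an induced module from a stabilizer are correct in spirit, but the heart of the matter is exactly the ``bookkeeping'' you flag --- matching the transversal of $U_{n-1}$-orbits in $U_n(-\lambda)$, with its $q$-power multiplicities and $\FF_q^\times$-labelings, against the five-branch recursion defining $\{j\}\ast_l\mu$. That is the actual content of the theorem, and your outline does not carry it out. Your alternative Frobenius-reciprocity route is also circular as stated: in this paper the corollary linking $\ast_1$ and $\ast_n$ is \emph{derived from} the restriction theorem you are trying to prove, so you cannot invoke it to recover the restriction formula without an independent argument (which is what \cite{TVe07} supplies).
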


By applying Frobenius reciprocity, we obtain the following corollary.

\begin{corollary}  Suppose $\mu\in \cS_{n-1}(q)$ and $\lambda\in \cS_{n}(q)$. 
\begin{enumerate}
\item[(a)] If $\lambda_{in}\neq 0$ for some $1\leq i\leq n-1$, then
$$\langle \chi^\lambda, \chi^\mu\ast_1 \chi^{\{n\}}\rangle=\langle \chi^{\{i\}}\ast_n\chi^\lambda,\chi^\mu\rangle.$$
\item[(b)] If  $\lambda_{in}= 0$ for all $1\leq i\leq n-1$, then 
$$\langle \chi^\lambda, \chi^\mu\ast_1 \chi^{\{n\}}\rangle=\left\{\begin{array}{ll} 1, & \text{if $\lambda\big|_{U_{n-1}}=\mu$,}\\ 0, & \text{otherwise.}\end{array}\right.$$
\end{enumerate}
\end{corollary}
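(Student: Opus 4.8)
The plan is to read the corollary directly off Frobenius reciprocity, exactly as the preceding sentence advertises. By Theorem~\ref{InductionAlgorithm} we have $\chi^\mu\ast_1\chi^{\{n\}}=\Ind_{U_{n-1}}^{U_n}(\chi^\mu)$, so for any $\lambda\in\cS_n(q)$,
$$\langle\chi^\lambda,\ \chi^\mu\ast_1\chi^{\{n\}}\rangle \;=\; \langle\chi^\lambda,\ \Ind_{U_{n-1}}^{U_n}(\chi^\mu)\rangle \;=\; \langle\Res_{U_{n-1}}^{U_n}(\chi^\lambda),\ \chi^\mu\rangle,$$
the last equality being ordinary Frobenius reciprocity for the usual inner products on class functions of $U_n$ and $U_{n-1}$. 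The two cases of the corollary are then immediate substitutions of the restriction formula of \cite{TVe07} quoted just above.

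For part (a): a labeled set partition in $\cS_n(q)$ has at most one nonzero entry in column $n$, so ``$\lambda_{in}\neq 0$ for some $1\le i\le n-1$'' determines a unique such $i$. The restriction theorem then reads $\Res_{U_{n-1}}^{U_n}(\chi^\lambda)=\chi^{\{i\}}\ast_n\chi^\lambda$, and plugging this into the display gives $\langle\chi^\lambda,\ \chi^\mu\ast_1\chi^{\{n\}}\rangle=\langle\chi^{\{i\}}\ast_n\chi^\lambda,\ \chi^\mu\rangle$, which is exactly the claim.

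For part (b): if column $n$ of $\lambda$ is identically zero, then $\lambda\big|_{U_{n-1}}$ is just $\lambda$ with its zero $n$th row and column deleted, hence is again a normal-form labeled set partition in $\cS_{n-1}(q)$, and the restriction theorem gives $\Res_{U_{n-1}}^{U_n}(\chi^\lambda)=\chi^{\lambda|_{U_{n-1}}}$. Thus $\langle\chi^\lambda,\ \chi^\mu\ast_1\chi^{\{n\}}\rangle=\langle\chi^{\lambda|_{U_{n-1}}},\ \chi^\mu\rangle$, which by orthogonality of distinct supercharacters (axiom (c) of a supercharacter theory: a common irreducible constituent would lie under two distinct supercharacters) is $0$ when $\lambda\big|_{U_{n-1}}\neq\mu$ and $1$ when $\lambda\big|_{U_{n-1}}=\mu$. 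One can cross-check the coinciding case without Frobenius: by Corollary~\ref{SuperCharacterSum}, $\Ind_{U_{n-1}}^{U_n}(\chi^\mu)$ is the sum of $\chi^\nu$ over all $\nu$ left minimal over $\mu$, and the superclass of a $\lambda$ with empty $n$th column contains no left-minimal representative other than $\lambda$ itself (forced by the restriction condition $\nu\big|_{U_{n-1}}=\mu$ together with the empty-column condition, since both then determine $\nu$ uniquely), so $\chi^\lambda$ occurs with coefficient exactly one.

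There is no real obstacle: the proof is pure bookkeeping once Theorem~\ref{InductionAlgorithm} and the restriction theorem of \cite{TVe07} are in hand. The only points needing a moment's attention are the uniqueness of the index $i$ in (a), and in (b) the fact that the ``otherwise'' branch of the restriction theorem returns a single honest supercharacter of $U_{n-1}$, so that orthogonality of supercharacters applies verbatim to pick out the value $1$.
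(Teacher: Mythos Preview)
Your argument is correct and matches the paper's own: the paper offers no more than the sentence ``By applying Frobenius reciprocity, we obtain the following corollary,'' combining Theorem~\ref{InductionAlgorithm} with the restriction theorem from \cite{TVe07}, which is exactly what you do. Your cross-check of the coinciding case in (b) via Corollary~\ref{SuperCharacterSum} is an extra detail not present in the paper.
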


\subsection{An alternate embedding of $U_{n-1}$}

The paper \cite{TVe07} uses  a different embedding of $U_{n-1}$ into $U_n$ (obtained by removing the first row rather than the last column).  This embedding no longer satisfies the conditions of Theorem \ref{SemidirectProductSuperinduction} for its left modules.  However, by Section \ref{SectionLeftRightSymmetry}, we may instead consider right modules.  In this case, Theorem \ref{SemidirectProductSuperinduction} applies and we get the same sequence of results with left and right reversed.  In fact, we have the following corollary.

\begin{corollary}  Let $U_{n-1}\subseteq U_n$ be the embedding obtained by setting the first row of $U_n$ equal to zero.  Then for $\mu$ an $\FF_q$-labeled set-partition of $\{2,3,\ldots,n\}$, 
$$\Ind_{U_{n-1}}^{U_n}(\chi^\mu)=\chi^{\{1\}}\ast_n\chi^\mu.$$
\end{corollary}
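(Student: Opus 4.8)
The plan is to deduce the corollary from Theorem~\ref{InductionAlgorithm} by transporting that result through the order-reversing automorphism of $U_n$. Let $w_0$ be the permutation matrix of $i\mapsto n+1-i$ and define $\sigma\colon U_n\to U_n$ by $\sigma(u)=w_0\,\Tr(u^{-1})\,w_0$, where $\Tr$ denotes transpose. Using $\Tr(AB)=\Tr(B)\Tr(A)$ and $w_0^2=1$ one checks that $\sigma$ is an involutive group automorphism of $U_n$ (it is not inner); on elementary matrices $\sigma(1+te_{ij})=1-te_{n+1-j,\,n+1-i}$, so $\sigma$ preserves the superclass/supercharacter structure while reversing the order of the row and column indices. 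Hence $\sigma$ restricts to an isomorphism between the ``last-column'' copy $U_{n-1}^{\mathrm{col}}=\{u\in U_n:u_{in}=0\text{ for }i<n\}$ used in Section~4.1 and the ``first-row'' copy $U_{n-1}^{\mathrm{row}}=\{u\in U_n:u_{1j}=0\text{ for }j>1\}$ appearing in the corollary, and it sends the supercharacter $\chi^\mu$ of $U_{n-1}^{\mathrm{col}}$, indexed by an $\FF_q$-labeled set partition $\mu$ of $\{1,\dots,n-1\}$, to the supercharacter $\chi^{\sigma\mu}$ of $U_{n-1}^{\mathrm{row}}$, where $\sigma\mu$ is the $\FF_q$-labeled set partition of $\{2,\dots,n\}$ obtained from $\mu$ by the relabeling $i\mapsto n+1-i$ and the reversal of every arc (an arc between $i$ and $j$ becoming an arc between $n+1-j$ and $n+1-i$); a negation of the arc labels also occurs, but is harmless since every newly created or relocated arc in the products below is summed over all labels in $\FF_q^\times$. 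Since $\sigma$ is an automorphism of $U_n$ carrying $U_{n-1}^{\mathrm{col}}$ onto $U_{n-1}^{\mathrm{row}}$, it commutes with induction.

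Granting this, I would argue as follows. Fix an $\FF_q$-labeled set partition $\nu$ of $\{2,\dots,n\}$ and set $\mu=\sigma\nu$, a labeled set partition of $\{1,\dots,n-1\}$; as $\mu$ ranges over all of these, $\nu=\sigma\mu$ ranges over all labeled set partitions of $\{2,\dots,n\}$, so it suffices to treat the case $\nu=\sigma\mu$. Theorem~\ref{InductionAlgorithm} gives $\Ind_{U_{n-1}^{\mathrm{col}}}^{U_n}(\chi^\mu)=\chi^\mu\ast_1\chi^{\{n\}}$, and applying $\sigma$ to both sides yields
$$\Ind_{U_{n-1}^{\mathrm{row}}}^{U_n}(\chi^{\sigma\mu})=\sigma\!\bigl(\chi^\mu\ast_1\chi^{\{n\}}\bigr).$$
It therefore remains to identify the right-hand side with $\chi^{\{1\}}\ast_n\chi^{\sigma\mu}$, that is, to show that applying $\sigma$ to the decomposition of $\mu\ast_1\{n\}$ reproduces $\{1\}\ast_n(\sigma\mu)$ at the level of $\FF_q$-labeled set partitions.

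This identification is the heart of the matter and the step I expect to be the main obstacle: one must check clause by clause that the five-case recursion defining $\mu\ast_i\{k\}$ --- which adjoins $k$ to $\mu$ while scanning the rows $i,i+1,\dots$ --- is carried by the relabeling $i\mapsto n+1-i$, $k\mapsto n+1-k$, arc-reversal onto the five-case recursion defining $\{j\}\ast_l\mu$ --- which adjoins $j$ to $\mu$ while scanning the columns $l,l-1,\dots$. Concretely, the base case $i=k$ corresponds to $j=l$; a blocking arc out of row $i$ to a column $>k$ corresponds to a blocking arc into column $l$ from a row $<j$, each contributing the factor $q$; a nonzero $\mu_{ik}$ corresponds to a nonzero $\mu_{jl}$, both of which get cut; the ``relocate an arc'' and ``create an arc'' cases go to their mirror images, with the sums over $t\in\FF_q^\times$ matching up (this absorbs the label negation noted above); and the recursion on $i+1$ becomes the recursion on $l-1$. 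With this mirror symmetry of the two products in hand, the displayed equation reads $\Ind_{U_{n-1}^{\mathrm{row}}}^{U_n}(\chi^\nu)=\chi^{\{1\}}\ast_n\chi^\nu$, which is the corollary after renaming $\nu$ to $\mu$.

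I note that the corollary could alternatively be proved by repeating the proof of Theorem~\ref{InductionAlgorithm} with the roles of rows/columns and left/right modules interchanged throughout --- using the right-module form of the left-right symmetry of Section~\ref{SectionLeftRightSymmetry} and the fact, recorded just before the corollary, that $U_{n-1}^{\mathrm{row}}\subseteq U_n$ satisfies the hypotheses of Theorem~\ref{SemidirectProductSuperinduction} for right modules, so that $\SInd=\Ind$ there --- but the symmetry argument above avoids duplicating the entire induction.
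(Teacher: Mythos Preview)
Your proposal is correct and takes a genuinely different route from the paper. The paper does not give an independent proof of this corollary at all: the surrounding text simply invokes the left--right symmetry of Section~\ref{SectionLeftRightSymmetry}, notes that the first-row embedding satisfies the hypotheses of Theorem~\ref{SemidirectProductSuperinduction} for \emph{right} modules, and asserts that ``we get the same sequence of results with left and right reversed.'' In other words, the paper's argument is precisely the alternative you mention in your final paragraph --- rerun Theorem~\ref{InducingBasis}, Corollary~\ref{SuperCharacterSum}, and Theorem~\ref{InductionAlgorithm} with right modules in place of left modules.

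Your main approach via the involutive automorphism $\sigma(u)=w_0\,\Tr(u^{-1})\,w_0$ is more economical: rather than dualizing several pages of argument, you transport the finished statement of Theorem~\ref{InductionAlgorithm} across $\sigma$. The price is two verifications the paper never needs: (i) that $\sigma$ carries $\chi^\mu$ to $\chi^{\sigma\mu}$ with $\sigma\mu$ given by the index-reversal $i\mapsto n+1-i$ together with negation of all arc labels, and (ii) that the five-clause recursion for $\mu\ast_i\{k\}$ corresponds under $\sigma$ to the five-clause recursion for $\{j\}\ast_l\mu$. Your sketches of both are accurate; in particular, your remark that the label negation is absorbed by the $\sum_{t\in\FF_q^\times}$ in every clause that creates or relocates an arc is exactly what makes (ii) go through, since the untouched arcs carry the negated labels on both sides. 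Each approach has its merits: yours is shorter and makes the symmetry between the two $\ast$-products explicit, while the paper's avoids introducing $\sigma$ and its action on supercharacters.
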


In particular, unlike in the symmetric group representation theory, the decomposition of induced characters depends on the embedding of $U_{n-1}$ into $U_n$.

\end{document}